\documentclass[11pt]{amsart}
\usepackage{amsmath,amssymb,amsthm}
\usepackage[dvipsnames]{xcolor}
\usepackage{tikz}
\usetikzlibrary{arrows.meta,calc,decorations.pathmorphing,positioning}
\usepackage[
  colorlinks=true,
  linkcolor=black,
  citecolor=RoyalBlue,
  urlcolor=black,
  filecolor=black,
  linktoc=all,
  breaklinks=true
]{hyperref}
\hypersetup{pdfborder={0 0 0}}
\newtheorem{theorem}{Theorem}[section]
\newtheorem{proposition}[theorem]{Proposition}
\newtheorem{lemma}[theorem]{Lemma}

\newtheorem{remark}[theorem]{Remark}

\newcommand{\CP}{\mathbb{CP}}
\newcommand{\Z}{\mathbb Z}
\newcommand{\Mod}{\operatorname{Mod}}
\newcommand{\Disc}{\operatorname{Disc}}

\definecolor{lineblue}{RGB}{20,82,180}
\definecolor{linered}{RGB}{205,38,54}
\definecolor{linegreen}{RGB}{0,132,92}
\definecolor{lineorange}{RGB}{255,125,0}
\definecolor{linepurple}{RGB}{210,35,190}
\definecolor{lineteal}{RGB}{0,175,205}
\definecolor{vertexfill}{RGB}{255,191,0}
\definecolor{diagfill}{RGB}{236,92,126}
\definecolor{pointedge}{RGB}{32,36,45}
\definecolor{quartic}{RGB}{185,55,115}
\definecolor{fibercol}{RGB}{55,105,180}
\title{Xiao's Genus-Two Fibration: Branched Covers and Braid Monodromy}
\author{Anar Akhmedov}
\address{School of Mathematics, University of Minnesota, Minneapolis, MN 55455, USA}
\address{Department of Mathematics, Harvard University, Cambridge, MA 02138, USA}
\email{akhmedov@umn.edu}
\email{akhmedov@math.harvard.edu}
\date{}
\begin{document}
\begin{abstract}
We compute the geometric monodromy factorization of Xiao's genus-two Lefschetz fibration directly from its branched-cover construction, following Moishezon's braid-monodromy method.  The complete quadrangle determines the motion of the six branch points and gives four nonseparating and three separating vanishing cycles.  At each of the three $3+3$ degenerations, the branch motion determines the spherical mapping class, and Xiao's local holomorphic model determines its genus-two lift.  After fixing a distinguished system of paths, we obtain an ordered positive factorization of type $(4,3)$ and give Artin coordinates for the seven factors.
\end{abstract}
\maketitle

\section{Introduction}
In his 1985 French monograph \emph{Surfaces fibr\'ees en courbes de genre deux}, Xiao described the genus-two fibration with seven singular fibers that is the subject of this paper.  His construction has a particularly concrete algebraic form: it starts with a complete quadrangle and a nodal quartic, blows up seven points of $\mathbb P^2$, and then takes a double cover \cite{Xiao}.  In work first posted on arXiv on September~6, 2015 (arXiv:1509.01853) and later published in the \emph{Kyoto Journal of Mathematics}, Akhmedov--Monden used Matsumoto's and Xiao's genus-two fibrations as building blocks for genus-two Lefschetz fibrations with small topology and for applications to small exotic $4$-manifolds \cite{AM}.  After recalling Xiao's branched-cover construction, they observed that its explicit monodromy should be accessible directly from this description by Moishezon's braid-monodromy techniques \cite[Example~8 and Remark~17]{AM}.  Our purpose here is to carry out that calculation.

We work from Xiao's algebraic branch model, without beginning with a known positive factorization in the genus-two mapping class group.  The calculation follows Moishezon's braid-monodromy method: project the branch curve, choose paths to the critical values, compute the local braids, and lift them to the covering surface; see \cite{MoishezonStable,MoishezonBraidsII,MT1}.  Auroux--Katzarkov developed this viewpoint for symplectic branched coverings of $\mathbb{CP}^2$ and related braid monodromy to the mapping-class-group monodromy of Lefschetz pencils \cite{AKBranched,AKDoubling}; see also \cite{KatzMonodromy}.  For Xiao's example, every step can be carried out explicitly.

We first choose coordinates for the complete quadrangle and an explicit quartic in the corresponding linear system.  After blowing up the seven distinguished points, the branch divisor is smooth, and the pencil through one of the triple points gives a six-point family on $\mathbb P^1$.  The discriminant of the resulting cubic singles out four simple ramification values.  The remaining critical values are $0,1,\infty$, where the six branch points separate into two triples.  Thus the local braid picture can be read directly from the branch geometry.  The four simple tangencies give the nonseparating vanishing cycles.  At the three $3+3$ values the branch motion gives the spherical class $T_\delta^2$; the local holomorphic Lefschetz model in Xiao's relatively minimal fibration then determines the preferred genus-two lift, as explained in Proposition~\ref{prop:sep}.

There is one point at the $3+3$ degenerations that deserves some care.  Lifting the natural disk braid generator by generator does not necessarily give the Lefschetz monodromy: the two genus-two lifts differ by the hyperelliptic involution.  The local Picard--Lefschetz model tells us which lift occurs.  The quartic, branch divisor, discriminant, and critical values are computed exactly over $\mathbb Q$, and the local monodromies come from the corresponding branch degenerations and the chosen paths.  After these geometric monodromies are fixed, we choose a disk chart and continuation paths and numerically follow the roots to obtain explicit Artin representatives.  The numerical calculation is used only to write these representatives explicitly.

We also identify the covering line bundle and the horizontal and vertical ramification curves.  For later use in fiber sums, we recall the fundamental-group facts proved in Akhmedov--Monden \cite{AM}.  That paper also uses Xiao's fibration, together with Matsumoto's genus-two fibration, to construct small exotic $4$-manifolds.  We use the double-cover formulas in \cite{BHPV,Pardini} and Endo's hyperelliptic signature formula \cite{Endo}.

\begin{theorem}[Main theorem]\label{thm:main}
The geometric monodromy of Xiao's genus-two Lefschetz fibration consists of four nonseparating and three separating positive Dehn twists and hence gives a positive genus-two monodromy factorization of type $(4,3)$.  The four nonseparating factors and the three spherical $3+3$ classes are derived from the complete-quadrangle branch model; the preferred separating lifts are fixed by Xiao's local holomorphic Lefschetz model.  Section~\ref{sec:ordered} gives an explicit ordered geometric realization for a fixed distinguished system of paths, together with numerically computed Artin coordinates.
\end{theorem}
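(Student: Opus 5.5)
The plan is to realize Xiao's fibration concretely as a double cover and read the monodromy off the branch motion. Fix coordinates with the four points of the complete quadrangle at $[1:0:0],[0:1:0],[0:0:1],[1:1:1]$, so the six lines are $xyz(x-y)(y-z)(x-z)=0$. Choose an explicit nodal quartic $Q$ with rational coefficients through the distinguished points, nodal at the prescribed ones. Blowing up the seven points gives a surface $X$ with branch divisor $B$ (the proper transform of the lines plus $Q$, modulo exceptional curves). I will verify directly that $B$ is smooth and reduced, and that $B$ is even, so $B=2L$; the double cover $S\to X$ is then smooth. The pencil of lines through one triple point, say $[0:0:1]$, lifts to a fibration $X\to\mathbb P^1$. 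It has parameter $t$ and rational fibers, and $B$ meets each fiber in six points. Hence the composite $S\to\mathbb P^1$ is a genus-two fibration. Relative minimality and the count of singular fibers come from Euler characteristic and signature bookkeeping, via the double-cover formulas \cite{BHPV,Pardini} and Endo's formula \cite{Endo}. Requiring $n+s=7$ with $\sigma=-\tfrac35 n+\tfrac15 s$ forces type $(4,3)$.

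Next I compute the branch motion. On the fiber over $t$, the six branch points are the roots of a sextic in the affine fiber coordinate. The sextic factors as the three lines through the base point, contributing fixed-type roots, times a cubic coming from $Q$. I compute the discriminant of this cubic in $t$ exactly over $\mathbb Q$. Its simple zeros give four values at which two branch points collide transversally. These are simple tangencies of $B$ with the fiber, so the local braid is a half-twist $\sigma$ along an arc. Its lift is a Dehn twist about a nonseparating curve, which by the Birman--Hilden correspondence is the preimage of the arc. At $t=0,1,\infty$ the fiber of the pencil is one of the lines through the base point. There the points split into two triples, the three points of $Q\cap\ell$ and the three line intersections. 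In the blown-up picture, each triple coalesces into a disk that does not contain the others.

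The local braid at such a value is the full twist of one triple about the other. In the mapping class group of the sphere with six marked points, this is $T_\delta^2$, where $\delta$ separates the triples. Its lifts to $\Mod(\Sigma_2)$ are $T_c$ and $\iota T_c$, where $c$ is the separating curve over $\delta$ and $\iota$ is the hyperelliptic involution.

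The main obstacle is choosing the correct lift. A generator-by-generator lift of the disk braid can land on either of the two, so I will use the local holomorphic model instead. Near each of these three values, Xiao's relatively minimal fibration has a single node on a separating vanishing cycle; I will check that the local equation is $y^2=(x^3-t)(\cdots)$ after the blow-ups and resolution. Picard--Lefschetz then gives exactly the positive twist $T_c$. I will confirm consistency using two facts. First, the product of all seven factors must be trivial in $\Mod(\Sigma_2)$. Second, the abelianization count $10n+... $ modulo $10$ must agree: the relation $(4\cdot 1+3\cdot 2)\equiv 0 \pmod{10}$ forces the separating lifts to carry no extra $\iota$. Finally, for the ordered realization in Section~\ref{sec:ordered}, I will fix a base point and a distinguished system of paths to the seven critical values. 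I will continue the roots numerically along these paths to write each arc, and hence each vanishing cycle, as an explicit Artin word.
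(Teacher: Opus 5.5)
Your overall route is the paper's: branch model, discriminant of the residual cubic, half twists at the four tangencies, $T_\delta^2$ at $0,1,\infty$, Picard--Lefschetz to choose the lift, and numerical continuation along a star system. The gap is in the three special fibers. Your description of them is wrong, and they are exactly the input needed for the step you call the main obstacle.

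\textbf{Branch points and clusters.} On a generic fiber, the three fixed branch points come from the three lines \emph{not} through the base point. The lines through the base point are the fibers over $0,1,\infty$ and are themselves branch components.

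The clusters are mixed, not ``$Q\cap\ell$ versus the line points.'' Each special line $\ell$ passes through one other triple point, where two quadrangle lines and one point of $Q$ collide. It also passes through one node of $Q$, where one line and both branches of $Q$ collide. In the paper's chart this is $q_0(u)=u(u-1)^2$ against the line points $0,t,1$. Your two triples do not coalesce at all.

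\textbf{Local model.} Locally the branch curve has an ordinary quadruple point with the fiber line as one branch: $v^2=t\prod_{j=1}^3(u-a_jt)$, up to units and higher-order terms, with distinct $a_j$. Your $y^2=(x^3-t)(\cdots)$ is a cuspidal degeneration. Its braid is $\sigma_1\sigma_2$, only a cube root of the full twist you assert.

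After blowing up, the fiber of $\widetilde S$ is $\widetilde E+2R+\widetilde E'$, where $R$ is the vertical $(-1)$-curve over $\ell$. Only after contracting these three curves is each special fiber two elliptic curves meeting in a single node. That single node is what justifies Picard--Lefschetz giving $t_d$.

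\textbf{Relative minimality and the type.} Relative minimality is this contraction, not Euler-characteristic and signature bookkeeping. Indeed $\widetilde S$ has $e=6$, $\sigma=-6$ and nonreduced special fibers. Also, Endo's formula reads $\sigma=-\tfrac35n-\tfrac15s$. With your sign, $n+s=7$ and $\sigma=-3$ have no integer solution.

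\textbf{Your lift checks cannot detect $\iota$.} The involution $\iota=T_1T_2T_3T_4T_5^2T_4T_3T_2T_1$ is a product of ten nonseparating twists. So it is $0$ in $H_1(\Mod(\Sigma_2))\cong\Z/10$, and $t_d$ and $\iota t_d$ both map to $2$. Hence $4\cdot1+3\cdot2\equiv0$ says nothing about lifts.

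The global product relation sees only the parity of the number of $\iota$'s. In the paper's star system, the raw Artin-chain lifts at $1$ and $0$ both carry $\iota$ and still multiply to the identity.

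The invariant that separates the two lifts is the action on $H_1(\Sigma_2;\Z)$: $t_d$ acts trivially, while $\iota t_d$ acts by $-I_4$. This is how the paper turns the geometric choice into Artin coordinates: $\mathcal W_1=\iota\widehat w_1$, $\mathcal W_0=\iota\widehat w_0$, $\mathcal W_\infty=\widehat w_\infty$.
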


\section{The complete quadrangle}
Choose homogeneous coordinates $[x:y:z]$ and put
\[
P_0=[1:0:0],\quad P_1=[0:1:0],\quad P_2=[0:0:1],\quad P_3=[1:1:1].
\]
Take the six lines
\[
\begin{array}{lll}
L_1:y-x=0, & L_2:x-z=0, & L_3:x=0,\\
L'_1:z=0, & L'_2:y=0, & L'_3:y-z=0.
\end{array}
\]
Their three diagonal double points are
\[
P_4=[1:1:0],\qquad P_5=[1:0:1],\qquad P_6=[0:1:1].
\]
This is the complete-quadrangle configuration used in the description of Xiao's fibration recalled by Akhmedov--Monden \cite[Example~8]{AM}.  Figure~\ref{fig:quadrangle} is an original colored schematic of the same incidence pattern.

\begin{figure}[ht]
\centering
\begin{tikzpicture}[scale=1.05,
  every node/.style={font=\small},
  linelabel/.style={fill=white,fill opacity=.92,text opacity=1,inner sep=1.1pt},
  pointlabel/.style={fill=white,fill opacity=.92,text opacity=1,inner sep=1.0pt,font=\small}]
\coordinate (P1) at (-3,-2);
\coordinate (P2) at ( 3,-2);
\coordinate (P3) at ( 0, 3);
\coordinate (P0) at ( 0, 0);
\coordinate (P4) at ( 1.285714,0.857143); 
\coordinate (P5) at (-1.285714,0.857143); 
\coordinate (P6) at ( 0,-2);

\draw[line width=1.85pt,lineblue] (-3.34,-2.57)--(0.34,3.57)
  node[linelabel,pos=.57,left=2pt,text=lineblue] {$L_2$};
\draw[line width=1.85pt,linered]  (3.34,-2.57)--(-0.34,3.57)
  node[linelabel,pos=.57,right=2pt,text=linered] {$L_1$};
\draw[line width=1.85pt,linegreen] (-3.48,-2)--(3.48,-2)
  node[linelabel,pos=.33,below=2pt,text=linegreen] {$L_3$};

\draw[line width=1.85pt,linepurple] (-3.35,-2.233)--(1.82,1.213)
  node[linelabel,pos=.78,above=2pt,text=linepurple] {$L'_1$};
\draw[line width=1.85pt,lineorange] (3.35,-2.233)--(-1.82,1.213)
  node[linelabel,pos=.78,above=2pt,text=lineorange] {$L'_2$};
\draw[line width=1.85pt,lineteal] (0,-2.47)--(0,3.52)
  node[linelabel,pos=.83,right=2pt,text=lineteal] {$L'_3$};

\foreach \p in {P0,P1,P2,P3,P4,P5,P6}{
  \filldraw[fill=black,draw=white,line width=.55pt] (\p) circle (2.75pt);
}

\node[pointlabel,right=4pt]       at (P0) {$P_0$};
\node[pointlabel,below left=2pt]  at (P1) {$P_1$};
\node[pointlabel,below right=2pt] at (P2) {$P_2$};
\node[pointlabel,right=3pt]       at (P3) {$P_3$};
\node[pointlabel,right=4pt]       at (P4) {$P_4$};
\node[pointlabel,left=4pt]        at (P5) {$P_5$};
\node[pointlabel,right=4pt]       at (P6) {$P_6$};
\end{tikzpicture}
\caption{The complete quadrangle, drawn with the incidence pattern used in Akhmedov--Monden \cite[Figure~4 and Example~8]{AM}.  The four triple points are $P_0,P_1,P_2,P_3$ and the three double points are $P_4,P_5,P_6$.}
\label{fig:quadrangle}
\end{figure}

Let $Y=\CP^2\#7\overline{\CP}^{\,2}$ be the blowup at $P_0,\dots,P_6$, with exceptional curves $E_i$ and hyperplane class $H$.  The strict transforms have classes
\[
\begin{aligned}
\widetilde L_1&=H-E_2-E_3-E_4,&
\widetilde L'_1&=H-E_0-E_1-E_4,\\
\widetilde L_2&=H-E_1-E_3-E_5,&
\widetilde L'_2&=H-E_0-E_2-E_5,\\
\widetilde L_3&=H-E_1-E_2-E_6,&
\widetilde L'_3&=H-E_0-E_3-E_6.
\end{aligned}
\]

\section{An explicit quartic and the smooth branch divisor}
The quartic below comes from the linear system determined by the complete quadrangle.  Let
\[
\mathcal V=\left\{F\in H^0(\mathbb P^2,\mathcal O_{\mathbb P^2}(4)):\;
F(P_i)=0\ (0\le i\le3),\ \operatorname{mult}_{P_j}F\ge2\ (4\le j\le6)\right\}.
\]
A homogeneous quartic has $15$ coefficients.  The four simple point conditions and the first-derivative conditions at $P_4,P_5,P_6$ define $13$ linear functionals (Euler's identity makes the value condition at a prescribed singular point redundant once all first derivatives vanish).  With respect to the standard monomial basis of $H^0(\mathbf P^2,\mathcal O_{\mathbf P^2}(4))$, the corresponding $13\times15$ evaluation matrix has rank $13$.  Hence
\[
\dim \mathcal V=2.
\]
Direct row reduction gives the basis
\[
F_1=z(x-y)(x+y-z)^2,\qquad
F_2=y(x-z)(x-y+z)^2.
\]
Thus the relevant quartics form the pencil
\begin{equation}\label{eq:quarticpencil}
C_{[\lambda:\mu]}:\quad \lambda F_1+\mu F_2=0,
\qquad [\lambda:\mu]\in\mathbb P^1.
\end{equation}
We choose the particularly symmetric member $[\lambda:\mu]=[1:1]$:
\begin{equation}\label{eq:quartic}
 C:\quad z(x-y)(x+y-z)^2+y(x-z)(x-y+z)^2=0.
\end{equation}
Proposition~\ref{prop:quartic} shows that this member is irreducible and has exactly the three prescribed ordinary nodes.  It follows that the members with no additional singularities form a nonempty Zariski-open subset of the pencil, containing the member chosen here.
Expanding gives
\[
\begin{aligned}
C={}&x^3y+x^3z-2x^2y^2+2x^2yz-2x^2z^2+xy^3-xy^2z\\
&-xyz^2+xz^3-2y^3z+4y^2z^2-2yz^3.
\end{aligned}
\]

\begin{proposition}\label{prop:quartic}
The quartic $C$ is absolutely irreducible.  Its singular locus is exactly
$\{P_4,P_5,P_6\}$, and all three singularities are ordinary nodes.  Its strict transform is a smooth rational curve of class
\[
\widetilde C=4H-E_0-E_1-E_2-E_3-2E_4-2E_5-2E_6.
\]
\end{proposition}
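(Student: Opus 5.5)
The plan is to check the local structure at $P_4,P_5,P_6$ directly, then rule out all other singularities and all factorizations using only degree and genus bookkeeping. The blowup statement then follows from the local picture.

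\textbf{Nodes at $P_4,P_5,P_6$.} By construction $C\in\mathcal V$, so it is singular at $P_4,P_5,P_6$. To see that each is an ordinary node, I will work in an affine chart at each point and extract the quadratic part of $C$.
\begin{itemize}
\item At $P_4=[1:1:0]$, set $x=1$, $y=1+u$, $z=v$. The quadratic part should be a product of two distinct linear forms.
\item I expect these to be the tangent directions of $L_1$ and $L'_1$, which meet at $P_4$; for instance the $F_1$ term contributes $v\cdot(-u)\cdot(\cdots)$.
\item At $P_5$ and $P_6$ the computation is the same, in charts $z=1$ and $y=1$ respectively.
\end{itemize}
It is a $3\times 3$ check of quadratic forms: I will compute each discriminant and confirm it is nonzero.

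\textbf{No further singularities.} Next I show $C$ is smooth everywhere else.
\begin{itemize}
\item First I confirm that $P_0,\dots,P_3$ are smooth points by computing the gradient there. At $P_0$, the monomial $x^3y+x^3z$ gives gradient $(0,1,1)\neq0$; the other three points are similar.
\item For points off the seven distinguished points, the cleanest rigorous route is an elimination computation. I will form the ideal $(C,\partial_xC,\partial_yC,\partial_zC)$ and saturate it by the ideal of $\{P_4,P_5,P_6\}$. Equivalently, I compute resultants in the charts and show the only common zeros are $P_4,P_5,P_6$, with multiplicity one each.
\item For the multiplicity, I use that the Milnor/Tjurina number of an ordinary node is $1$.
\item A Gröbner basis over $\mathbb Q$ settles this, and I will present it as a computer-verifiable certificate.
\end{itemize}
This exhaustive check over all of $\mathbb P^2$, including the line at infinity in each chart, is the main obstacle, since it is purely computational. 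A lighter-weight alternative is to restrict $C$ to a pencil of lines through a node and use the discriminant, but I would try the Gröbner elimination first.

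\textbf{Irreducibility.} Given only three nodes, I argue by degree. Suppose $C$ splits over $\mathbb C$.
\begin{itemize}
\item A line plus a cubic meet in $3$ points, each of which is a singular point of $C$.
\item Two conics meet in $4$ points.
\item A repeated component would make $C$ singular along a curve.
\end{itemize}
So a reducible quartic with isolated singularities has at least $3$ singular points, with equality only in the line-plus-nodeless-cubic case, and in that case the three points $P_4,P_5,P_6$ must lie on the line. But $P_4,P_5,P_6$ are not collinear: the determinant of their coordinate matrix is $-2\ne0$. Hence $C$ is absolutely irreducible.

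\textbf{The strict transform.} Since $C$ is irreducible with three nodes, its geometric genus is $3-3=0$, so the normalization is rational. Blowing up an ordinary node separates its two branches, and blowing up a smooth point does not affect smoothness. Hence $\widetilde C$ is smooth and isomorphic to the normalization, i.e.\ a smooth rational curve. Its class is $4H-\sum m_iE_i$, where $m_i=\operatorname{mult}_{P_i}C$: this gives $1$ at $P_0,\dots,P_3$ (smooth points lying on $C$) and $2$ at the nodes. As a consistency check, adjunction gives $\widetilde C^2=16-4-12=0$ and $K\cdot\widetilde C=-12+4+6=-2$, so $2g-2=-2$.
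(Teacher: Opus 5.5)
\textbf{Gap in the irreducibility step.} Your node computation, the elimination certificate for the singular locus, and the strict-transform argument agree in substance with the paper. The irreducibility step, however, does not work as written. The claim that a reducible quartic with isolated singularities has at least three singular points is false. A smooth cubic together with an inflectional tangent line has exactly one singular point. So do two smooth conics with a single point of contact of order four, for example $yz=x^2$ and $yz=x^2+y^2$.

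Bézout gives $3$ (resp.\ $4$) intersection points only \emph{counted with multiplicity}. So, as stated, you cannot conclude that a line component passes through three distinct singular points. The missing ingredient is one you have already proved: every singular point of $C$ is an ordinary node. At an ordinary node exactly two smooth branches meet transversally. Hence no three components pass through it, and two components meeting there have local intersection multiplicity $1$.

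With this, the argument closes as follows.
\begin{enumerate}
\item If $C=\ell+G$ with $\ell$ a line and $G$ a cubic, possibly reducible, then $\ell\cap G$ consists of three \emph{distinct} nodes. These must be $P_4,P_5,P_6$, which contradicts $\det\neq0$.
\item If $C$ has no line component, it is two conics meeting in four distinct nodes. This is impossible, since there are only three.
\end{enumerate}
This grouping also handles the decompositions with three or more components, which your list (``line plus nodeless cubic'', ``two conics'') does not address explicitly.

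After this repair, your route is a more elementary alternative to the paper's. The paper uses the normalization formula $p_a(C)=\sum g(C_i)+\delta-r+1$ with $p_a=\delta=3$ to force a line plus a smooth cubic. Both arguments rely on transversality at the nodes and on the same determinant.

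\textbf{Tangent directions at the nodes.} Your expectation that the two tangent directions at $P_4$ are those of $L_1$ and $L'_1$ is wrong. In your chart the quadratic part is $-4uv+(u-v)^2=u^2-6uv+v^2$. Neither $u=0$ (the line $L_1$) nor $v=0$ (the line $L'_1$) divides it. The same holds at $P_5$. At $P_6$ the tangent cone is $-2(a^2+b^2)$, to which neither $L_3$ nor $L'_3$ is tangent.

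This does not affect the proposition itself, since you only need a nonzero discriminant. But the opposite of your expectation is what the construction requires. Remark~\ref{rem:branchsmooth} uses exactly that the sides through a diagonal point are tangent to neither branch of the node. Otherwise the strict transforms of $C$ and of those lines would meet on $E_4,E_5,E_6$, and the branch divisor would not be smooth.
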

\begin{proof}
We include the calculation because both assertions will be used later.  Direct substitution shows first that $C$ passes through $P_0,\dots,P_6$.

Put $f(y,z)=C(1,y,z)$.  Over $\mathbb Q$, a direct elimination of the affine singularity equations gives the reduced Gr\"obner basis
\[
 \langle f,f_y,f_z\rangle
 =\langle y+z-1,\ z^2-z\rangle
\]
for lexicographic order $y>z$.  Hence the only singular points with $x\neq0$ are
\[
 [1:1:0]=P_4,\qquad [1:0:1]=P_5.
\]
It remains to inspect the line $x=0$.  There
\[
\begin{aligned}
 C(0,y,z)&=-2yz(y-z)^2,\\
 C_x(0,y,z)&=(y-z)^2(y+z),\\
 C_y(0,y,z)&=-2z(y-z)(3y-z),\\
 C_z(0,y,z)&=-2y(y-z)(y-3z).
\end{aligned}
\]
If $y=z\neq0$ we obtain $P_6=[0:1:1]$.  If $y\neq z$, the equation $C_x=0$ forces $y+z=0$; then $C_y\neq0$ for a projective point.  Thus there are no further singularities, and
\[
\operatorname{Sing}(C)=\{P_4,P_5,P_6\}.
\]

The quadratic tangent cones are also explicit.  At $P_4$ and $P_5$ they are, after suitable affine coordinates $(A,B)$,
\[
 A^2-6AB+B^2,
\]
and at $P_6$ the tangent cone is
\[
 -2(A^2+B^2).
\]
Each has two distinct complex linear factors.  Therefore all three singularities are ordinary nodes.

We now prove absolute irreducibility without appealing to a computer factorization.  Since the singular locus is finite, $C$ is reduced.  Suppose, over $\mathbb C$, that $C=\bigcup_{i=1}^r C_i$ is its decomposition into irreducible components.  By B\'ezout, any two positive-degree projective plane curves have nonempty intersection, so the support of $C$ is connected.  Since every intersection between distinct components would occur among the three ordinary nodes already identified, the total $\delta$-invariant is $\delta=3$.  For a connected nodal curve the normalization formula gives
\[
 p_a(C)=\sum_{i=1}^r g(C_i)+\delta-r+1.
\]
Here $p_a(C)=3$ and $\delta=3$, so
\[
 \sum_{i=1}^r g(C_i)=r-1.
\]
If $r\ge3$, the degree partition of $4$ forces every component to have degree at most two, hence the left side is zero, a contradiction.  Thus $r=2$.  A $2+2$ decomposition again has zero total geometric genus and is impossible.  The only remaining possibility is a line plus a cubic, and the cubic must have genus one, hence be smooth.  Consequently the three nodes must be precisely the three transverse line--cubic intersections.  The line component would therefore contain all three nodes $P_4,P_5,P_6$.  But
\[
\det\!\begin{pmatrix}1&1&0\\1&0&1\\0&1&1\end{pmatrix}=-2\neq0,
\]
so these three points are not collinear.  Hence no line--cubic decomposition exists and $C$ is absolutely irreducible.

The divisor class follows from the four simple base points $P_0,\dots,P_3$ and the three double points $P_4,P_5,P_6$.  Finally $p_a(C)=3$ and the three nodes have $\delta=1$, so the normalization has genus zero.  Blowing up the three nodes resolves them, while blowing up the smooth points $P_0,\dots,P_3$ preserves smoothness of the strict transform; therefore $\widetilde C$ is smooth and rational.
\end{proof}

\begin{remark}\label{rem:branchsmooth}
The disjointness of the branch components after blowup can be checked without a genericity assertion.  Restricting \eqref{eq:quartic} to the six sides gives
\[
\begin{array}{c|c}
\text{side} & C|_{L}\\ \hline
L_1:\ y=x & xz^2(x-z)\\
L_2:\ z=x & xy^2(x-y)\\
L_3:\ x=0 & -2yz(y-z)^2\\
L'_1:\ z=0 & xy(x-y)^2\\
L'_2:\ y=0 & xz(x-z)^2\\
L'_3:\ y=z & 2x^2z(x-z).
\end{array}
\]
Thus every line--quartic intersection is one of the seven prescribed points.  At a diagonal node the corresponding factor has multiplicity exactly two, so the line is tangent to neither local branch of the node: tangency to either branch would give local intersection multiplicity at least three, since the other branch also passes through the node.  After one blowup the strict transform of the line is disjoint from the two points where the normalization of $C$ meets the exceptional curve.  At the four triple points the relevant intersections are simple, and the blowup separates the distinct tangent directions.  The six line components are themselves separated by the seven blowups.  Hence the seven strict transforms forming $D$ are pairwise disjoint.
\end{remark}

The branch divisor is
\[
D=\widetilde L_1+\widetilde L_2+\widetilde L_3+
\widetilde L'_1+\widetilde L'_2+\widetilde L'_3+\widetilde C
=10H-4(E_0+\cdots+E_6).
\]
By Remark~\ref{rem:branchsmooth}, the seven components are pairwise disjoint on $Y$; equivalently, for each of the six sides the prescribed simple and nodal intersections exhaust its intersection number four with $C$.  The blowups also separate all intersections among the six line components.  Hence $D$ is smooth and divisible by two in $\operatorname{Pic}(Y)$.

Let
\[
p:\widetilde S\longrightarrow Y
\]
be the double cover branched along $D$.  Figure~\ref{fig:cover} shows the two maps in the construction.

\begin{figure}[ht]
\centering
\begin{tikzpicture}[>=Latex, node distance=2.0cm, every node/.style={font=\small}]
\node[draw,rounded corners,fill=quartic!10,minimum width=3.4cm,minimum height=1.0cm] (S) {$\widetilde S$};
\node[draw,rounded corners,fill=lineblue!10,below=1.6cm of S,minimum width=4.6cm,minimum height=1.0cm] (Y) {$Y=\CP^2\#7\overline{\CP}^{\,2}$};
\node[draw,rounded corners,fill=linegreen!10,right=4.1cm of Y,minimum width=2.4cm,minimum height=1.0cm] (B) {$\mathbb P^1_t$};
\draw[->,very thick,quartic] (S)--node[right] {$p$ (double cover)} (Y);
\draw[->,very thick,lineblue] (Y)--node[above] {$\pi_{P_1}$} (B);
\draw[->,very thick,linegreen,bend left=18] (S) to node[above] {$\widetilde f$} (B);
\node[below=0.3cm of Y,align=center] {$D$ is the branch divisor;\\$\pi_{P_1}$ is the resolved pencil through $P_1$.};
\end{tikzpicture}
\caption{The branch-cover picture.  Pulling back the resolved pencil through $P_1$ gives a genus-two fibration on $\widetilde S$; contracting the three vertical $(-1)$-curves gives Xiao's relatively minimal fibration.}
\label{fig:cover}
\end{figure}

\section{The double cover in detail}\label{sec:doublecover}
We use the branch-cover model recalled in \cite[Example~8]{AM}, keeping track of the covering line bundle, the ramification curves, and their intersections with the pencil.

Put
\[
D=\widetilde L_1+\widetilde L_2+\widetilde L_3+
  \widetilde L'_1+\widetilde L'_2+\widetilde L'_3+\widetilde C.
\]
Using the divisor classes above,
\begin{equation}\label{eq:Dclass}
 D=10H-4(E_0+\cdots+E_6)=2\mathcal L,
 \qquad
 \mathcal L=5H-2(E_0+\cdots+E_6).
\end{equation}
Since the seven components of $D$ are smooth and pairwise disjoint, $D$ is a smooth, reduced (not necessarily connected) divisor.  The line bundle $\mathcal O_Y(\mathcal L)$ and a section with zero divisor $D$ determine the standard degree-two cover \cite{Pardini,BHPV}
\[
p:\widetilde S\longrightarrow Y.
\]
Because $D$ is reduced and nonempty, the defining section is not a square in the function field of $Y$; hence the cover is connected.  Indeed, along every component of the reduced branch divisor $D$, the defining section has vanishing order one, whereas the divisor of a square has even valuation along every prime divisor.  The local equation $w^2=s$ along $D$ also shows that $\widetilde S$ is smooth.

\begin{proposition}[Algebraic construction of the fibration]\label{prop:algconstruction}
Let $\pi_{P_1}:Y\to\mathbb P^1$ be the resolved pencil through $P_1$.  The composite $\widetilde f=\pi_{P_1}\circ p$ has connected generic fiber of genus two.  Three ramification curves are sections of $\widetilde f$, three are vertical $(-1)$-curves, and the ramification curve over $\widetilde C$ is a degree-three multisection.  Contracting the vertical $(-1)$-curves gives the relatively minimal genus-two fibration $f:S\to\mathbb P^1$ used below.
\end{proposition}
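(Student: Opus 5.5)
The plan is to compute everything on $Y$ with intersection numbers, and then transfer the results to $\widetilde S$ using the standard double-cover formulas $R_i\cong B_i$ and $R_i^2=\tfrac12 B_i^2$, where $R_i=p^{-1}(B_i)_{\mathrm{red}}$ for a branch component $B_i$ of $D$.

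\emph{Setting up the pencil.} The pencil through $P_1=[0:1:0]$ consists of the lines $x=tz$. After blowing up $P_1$ its fiber class is $F=H-E_1$, with $F^2=0$. Intersecting with \eqref{eq:Dclass} gives
\[
D\cdot F=10-4=6 .
\]
So a general fiber $F\cong\mathbb P^1$ meets the smooth branch divisor $D$ transversally in six distinct points; transversality holds off finitely many $t$. Its preimage is therefore the double cover of $\mathbb P^1$ branched at six points. This cover is connected because the branch locus is nonempty, and Riemann--Hurwitz gives genus two.

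\emph{Sorting the ramification curves.} I sort the components of $D$ by their intersection with $F$.
\begin{itemize}
\item The lines through $P_1$ are $L_2$, $L_3$ and $L'_1$. Their classes all contain $-E_1$, so $F\cdot\widetilde L=1-1=0$ and these components are vertical.
\item The lines not through $P_1$ are $L_1$, $L'_2$ and $L'_3$. For these $F\cdot\widetilde L=1$, so they are sections of $\pi_{P_1}$.
\item From Proposition~\ref{prop:quartic}, $F\cdot\widetilde C=4-1=3$.
\end{itemize}
Since each branch component maps isomorphically to its ramification curve, the ramification curves over $\widetilde L_1$, $\widetilde L'_2$ and $\widetilde L'_3$ are sections of $\widetilde f$, and the one over $\widetilde C$ is a degree-three multisection. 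Each strict transform of a line through three blown-up points has self-intersection $1-3=-2$. The three vertical ramification curves are therefore smooth rational curves with self-intersection $-1$, lying in fibers of $\widetilde f$. This gives the stated structure.

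\emph{Contraction and relative minimality.} Contracting the three vertical $(-1)$-curves yields a smooth surface $S$ and a genus-two fibration $f:S\to\mathbb P^1$. The main obstacle is showing that $S$ is relatively minimal, i.e.\ that no $(-1)$-curve remains in a fiber.
\begin{itemize}
\item I would analyze each of the three special fibers explicitly. For example, the fiber containing $L_2$ is $\widetilde L_2+E_3+E_5$. Each of $E_3$ and $E_5$ has self-intersection $-1$, and $E_j\cdot D=4$ because the multiplicities of $D$ at the blown-up points are four. So their preimages are double covers of $\mathbb P^1$ branched at four points: elliptic curves of self-intersection $-2$, not rational curves.
\item After contracting the $(-1)$-curve over $\widetilde L_2$, the fiber becomes two elliptic curves of self-intersection $-1$ meeting at one point. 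This is the expected separating $3+3$ fiber, and it contains no rational components.
\item The other fibers are double covers of irreducible lines, hence irreducible, and so are not $(-1)$-curves of genus zero. This settles relative minimality.
\end{itemize}

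\emph{Euler characteristic check.} As a consistency check I would verify that $e(S)=e(\widetilde S)-3$ agrees with $e(S)=4\cdot(-2)+7=-1$ for seven Lefschetz fibers. This matches the type $(4,3)$ announced in Theorem~\ref{thm:main}.
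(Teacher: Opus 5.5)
The core of your argument is the paper's: the same intersection numbers with $F=H-E_1$, Riemann--Hurwitz for the six-point double cover, and $R_B^2=\tfrac12B^2$ for the side components. You differ on relative minimality, which the paper does not prove but takes from \cite[Example~8]{AM}. Your direct fiber analysis is correct and makes the proposition self-contained.
\begin{itemize}
\item The fiber $\widetilde L_2+E_3+E_5$ pulls back to $2R+\widetilde E_3+\widetilde E_5$, with $\widetilde E_j$ elliptic of square $-2$ and $R\cdot\widetilde E_j=1$. Contracting $R$ leaves two elliptic $(-1)$-curves meeting once.
\item The fibers $\widetilde L'_1+E_0+E_4$ and $\widetilde L_3+E_2+E_6$ behave the same way. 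This also exhibits the $3+3$ nodal fibers over $t=0,1,\infty$.
\end{itemize}

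Two steps should be stated rather than asserted.
\begin{itemize}
\item Every other pencil member is an irreducible line. This holds because all six other blown-up points lie on $L'_1,L_2,L_3$.
\item The double cover of such a member cannot split, and the phrase ``double covers of irreducible lines, hence irreducible'' is not enough by itself. The reason is that the member meets the three horizontal line components transversally at three distinct points, so the restricted branch divisor has points of odd multiplicity.
\end{itemize}
You also use implicitly that each $E_j$ meets $D$ in four distinct points, which is the content of Remark~\ref{rem:branchsmooth}.

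Your closing Euler-characteristic check contains an arithmetic error. Since $e(S^2)e(\Sigma_2)=2\cdot(-2)=-4$, seven nodes give $e(S)=-4+7=3$, not $-1$. The value $3$ is what agrees with $e(\widetilde S)-3=(2e(Y)-e(D))-3=(20-14)-3$. As you wrote it, the ``check'' would give a contradiction. Also, this count only confirms seven nodes in total. The split into four nonseparating and three separating fibers comes from your fiber analysis, not from $e(S)$.
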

\begin{proof}
The intersection table below gives $D\cdot F=6$ and separates the six side components into three horizontal and three vertical ones.  Thus a generic fiber is a connected double cover of $\mathbb P^1$ branched at six points, and Riemann--Hurwitz gives genus two.  Formula \eqref{eq:ramself} shows that the ramification lift of every side has self-intersection $-1$.  The three horizontal lifts meet a generic fiber once and are sections; the three vertical lifts lie in special fibers.  The ramification lift of $\widetilde C$ maps with degree $\widetilde C\cdot F=3$ to the base.  Contracting the three vertical lifts gives the relative minimalization described for this branch model in \cite[Example~8]{AM}.
\end{proof}
Locally near a point of $D$ the cover has the form
\[
w^2=s,
\]
where $s=0$ is a local equation of $D$.  Thus the ramification divisor $R\subset\widetilde S$ maps isomorphically to $D$ and satisfies
\[
p^*D=2R.
\]
For an individual branch component $B\subset D$ with ramification lift $R_B$, one has
\begin{equation}\label{eq:ramself}
R_B^2=\frac12 B^2.
\end{equation}
Indeed, $(p^*B)^2=2B^2$ and $p^*B=2R_B$, so $4R_B^2=2B^2$.
Every strict transform of one of the six sides has square $-2$, hence its ramification lift is a $(-1)$-sphere.

For a smooth double cover the canonical bundle formula gives \cite{Pardini,BHPV}:
\begin{equation}\label{eq:canonicalcover}
K_{\widetilde S}=p^*(K_Y+\mathcal L)
=p^*\bigl(2H-E_0-\cdots-E_6\bigr).
\end{equation}
Consequently
\[
K_{\widetilde S}^2
=2\bigl(2H-E_0-\cdots-E_6\bigr)^2=-6,
\]
and the double-cover holomorphic Euler-characteristic formula \cite{Pardini,BHPV} gives
\[
\chi(\mathcal O_{\widetilde S})
=2\chi(\mathcal O_Y)+\frac12\mathcal L(\mathcal L+K_Y)
=2+\frac12(10-14)=0.
\]
These calculations agree with the invariants recorded for the same branched cover in \cite[Example~8]{AM}.

Let
\[
F=H-E_1
\]
be the fiber class of the resolved pencil of lines through $P_1$.  Then
\[
F^2=0,
\qquad
D\cdot F=6.
\]
The six branch points on a generic pencil member therefore arise before any local coordinate calculation.  More precisely,
\[
\begin{array}{c|c}
\text{branch component}&F\cdot B\\ \hline
\widetilde L'_1,\widetilde L_2,\widetilde L_3&0\\
\widetilde L_1,\widetilde L'_2,\widetilde L'_3&1\\
\widetilde C&3.
\end{array}
\]
Thus three line components are vertical components of special members of the pencil, three are genuine branch sections, and the quartic contributes the remaining three moving branch points.  The divisor calculation therefore gives the decomposition
\[
6=1+1+1+3
\]
that appears explicitly in the hyperelliptic equation below.

The three horizontal branch components have square $-2$, and by \eqref{eq:ramself} their ramification lifts are $(-1)$-curves meeting a generic genus-two fiber once.  They are therefore three disjoint $(-1)$-sections.  The three vertical branch-line components also lift to $(-1)$-curves, but these lie in singular fibers and are exactly the curves contracted in passing to the relatively minimal Xiao fibration.  This section/vertical-component dichotomy is the one described geometrically in \cite[Example~8]{AM}.

Finally, the preimage $\widetilde F=p^{-1}(F)$ of a generic pencil member is a double cover of $F\cong\mathbb P^1$ branched at six points.  Riemann--Hurwitz gives
\[
2g(\widetilde F)-2=2(-2)+6=2,
\]
so $g(\widetilde F)=2$.  Thus the branch divisor class alone already forces genus two.

\section{The pencil and the six hyperelliptic branch points}\label{sec:pencil}
The pencil through $P_1$ is
\[
\ell_t:\ z=tx,\qquad t\in\mathbb P^1,
\]
with $\ell_\infty=\{x=0\}$ and fiber class $F=H-E_1$.  Since
\[
D\cdot F=(10H-4\sum E_i)(H-E_1)=6,
\]
a generic pencil member meets $D$ in six points.  Its inverse image is therefore a genus-two double cover of $\mathbb P^1$.

Set $x=1$ and $u=y/x$.  The three line components not containing $P_1$ contribute
\[
u=0,\qquad u=1,\qquad u=t.
\]
Substitution in \eqref{eq:quartic} gives
\begin{equation}\label{eq:q}
q_t(u)=t(1-u)(1+u-t)^2+u(1-t)(1-u+t)^2.
\end{equation}
The other three branch points are the roots $r_1(t),r_2(t),r_3(t)$ of $q_t$, viewed on the projective fiber $\mathbb P^1_u$.  When $t=\frac12$ the cubic term vanishes and one root moves to $u=\infty$; that projective root is simple, so $t=\frac12$ is not a critical value.  Indeed, the homogenized cubic at $t=\frac12$ is $V(-\frac32U^2+\frac32UV+\frac18V^2)$, so $[U:V]=[1:0]$ is a simple root.  A regular fiber is therefore birational to
\begin{equation}\label{eq:family}
v^2=u(u-1)(u-t)q_t(u).
\end{equation}
Collisions of a quartic branch point with one of the three distinguished branch sections are read off from
\begin{equation}\label{eq:sectioncollisions}
q_t(0)=t(1-t)^2,\qquad q_t(1)=t^2(1-t),\qquad q_t(t)=-2t(t-1).
\end{equation}
Thus, for finite $t$, a root of $q_t$ can meet one of $0,t,1$ only at $t=0$ or $t=1$.  The fiber over $t=\infty$ is treated in the $s=1/t$ chart in Section~7.  The discriminant calculation below shows that the only other collisions are the four simple ramification values of the trigonal curve.  Hence the six branch points are distinct over the complement of these seven values.

\begin{figure}[ht]
\centering
\begin{tikzpicture}[scale=1.05,every node/.style={font=\small}]
\draw[->,thick] (-0.4,0)--(9.4,0) node[right] {$u$};
\foreach \x/\lab/\col in {0.6/$0$/lineblue,2.1/$t$/linegreen,3.6/$1$/linered,5.1/$r_1(t)$/quartic,6.7/$r_2(t)$/quartic,8.3/$r_3(t)$/quartic}{\fill[\col] (\x,0) circle (3.2pt);\node[above=4pt] at (\x,0) {\lab};}
\draw[line width=.8pt] (0.55,-.35)--(3.65,-.35); \node at (2.1,-.65) {three line components};
\draw[line width=.8pt] (5.05,-.35)--(8.35,-.35); \node at (6.7,-.65) {three points on $\widetilde C$};
\end{tikzpicture}
\caption{The six branch points on a generic member of the pencil: three are supplied by line components and three move on the normalized quartic.  Their actual order varies with $t$; the picture is schematic.}
\label{fig:sixpoints}
\end{figure}

\section{The discriminant and the four tangencies}
An exact discriminant calculation in $\mathbb Q[t]$ gives
\begin{equation}\label{eq:disc}
\Disc_u(q_t)=-32t^2(t-1)^2R(t),
\end{equation}
where
\begin{equation}\label{eq:R}
R(t)=2t^4-4t^3+12t^2-10t-1.
\end{equation}
Moreover
\[
\gcd(R,R')=1,\qquad \operatorname{Disc}_t(R)=-1259712\neq0,
\]
so the four roots of $R$ are simple.  Since $R(\frac12)=-\frac{27}{8}$, none of them is the value at which the leading coefficient $1-2t$ of $q_t$ vanishes.  Thus $q_\tau$ is a genuine cubic at every root $\tau$ of $R$.  The factors $t^2$ and $(t-1)^2$ record the nodal directions in the plane model; the third analogous nodal direction occurs at $t=\infty$.  They are not simple ramification values of the normalized trigonal curve.  The exact algebra behind these statements is recorded in Appendix~\ref{app:exact}.

Since $\widetilde C\simeq\mathbb P^1$ and $\widetilde C\cdot F=3$, projection from $P_1$ restricts to a degree-three map
\[
\rho:\widetilde C\longrightarrow\mathbb P^1_t.
\]
Riemann--Hurwitz gives
\[
-2=3(-2)+\deg R_\rho,
\]
so $\deg R_\rho=4$.  Since the four roots of $R$ are simple and account for the full ramification degree, they are precisely the four simple ramification values of $\rho$.

Writing $s=t-\frac12$ gives
\[
R(t)=2s^4+9s^2-\frac{27}{8},
\]
so the four values are
\[
\tau_{L,R}=\frac12\mp\frac12\sqrt{-9+6\sqrt3},\qquad
\tau_{U,D}=\frac12\pm\frac{i}{2}\sqrt{9+6\sqrt3}.
\]

\begin{figure}[ht]
\centering
\begin{tikzpicture}[x=2.25cm,y=1.05cm,every node/.style={font=\small}]
\draw[->,thick] (-.72,0)--(1.72,0) node[right] {$\Re t$};
\draw[->,thick] (0,-2.62)--(0,2.62) node[above] {$\Im t$};
\draw[densely dashed,thin] (.5,-2.42)--(.5,2.42);
\fill[lineblue] (-.090,0) circle (3pt) node[below left=3pt] {$\tau_L$};
\fill[lineblue] (1.090,0) circle (3pt) node[below right=3pt] {$\tau_R$};
\fill[quartic] (.5,2.202) circle (3pt) node[right=5pt] {$\tau_U$};
\fill[quartic] (.5,-2.202) circle (3pt) node[right=5pt] {$\tau_D$};
\fill[linegreen] (0,0) circle (3pt) node[above right=3pt] {$0$};
\fill[linered] (1,0) circle (3pt) node[above left=3pt] {$1$};
\draw[->,lineorange,thick] (1.38,1.32)--(1.62,1.90);
\node[lineorange] at (1.42,2.05) {$\infty$};
\end{tikzpicture}
\caption{The seven critical values in the base: four tangency values (blue/magenta) and the three separating values $0,1,\infty$ (green/red/orange).}
\label{fig:critical}
\end{figure}

\begin{proposition}\label{prop:half}
For each $\tau\in\{\tau_L,\tau_R,\tau_U,\tau_D\}$, exactly two roots of $q_t$ coalesce.  A positive loop about $\tau$ gives a positive half twist $h_{\alpha_\tau}$ about an arc $\alpha_\tau$ joining those two branch points.  Its hyperelliptic lift is a positive Dehn twist $t_{a_\tau}$ about the nonseparating curve $a_\tau=p^{-1}(\alpha_\tau)$.
\end{proposition}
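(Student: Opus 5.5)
The proof has three steps: identify the local collision at $\tau$, read off the braid from the local equation, and lift that braid to the double cover.

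\textbf{Step 1: the collision is a simple double root.} Fix $\tau\in\{\tau_L,\tau_R,\tau_U,\tau_D\}$. From \eqref{eq:disc}, $\Disc_u(q_t)$ vanishes at $\tau$, and it does so to order exactly one, since $\tau$ is a simple root of $R$ and $\tau\neq0,1$. By the preceding discussion $q_\tau$ is a genuine cubic, since $R(\tfrac12)\neq0$. So $q_\tau$ has a multiple root. A triple root is impossible: it would be a point of ramification index three for $\rho$, contributing $2$ to $\deg R_\rho$, while the simple-root count shows each of the four ramification values absorbs exactly one unit of the total $\deg R_\rho=4$. Hence exactly two roots $r_i,r_j$ of $q_t$ coalesce at $u_0$, and the third root stays simple and away from $u_0$.

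\textbf{Step 2: the two colliding points avoid the other branch points.} By \eqref{eq:sectioncollisions}, $q_\tau(0)$, $q_\tau(1)$ and $q_\tau(\tau)$ are all nonzero because $\tau\ne0,1$. So $u_0$ is distinct from $0$, $1$ and $\tau$, and no other branch point participates in the collision.

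\textbf{Step 3: the local braid is a positive half twist.} Over a small disk around $\tau$, the point of $\widetilde C$ over $u_0$ is a simple ramification point of $\rho$, and $\widetilde C$ is smooth there. In local holomorphic coordinates the pair of roots is $\{u:(u-u_0)^2=c\,(t-\tau)\}$ up to higher order, with $c\neq0$. The constant $c$ is nonzero because the discriminant vanishes to first order and the branch is smooth, so after a holomorphic coordinate change this is exactly $w^2=t-\tau$. As $t$ travels once counterclockwise around $\tau$, the two roots $\pm\sqrt{c(t-\tau)}$ exchange by a counterclockwise half rotation, while the other four branch points stay in a fixed small region of the sphere. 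Transporting this local picture back along the chosen path to the basepoint gives an arc $\alpha_\tau$ joining $r_i$ and $r_j$ and avoiding the other branch points. The monodromy is then the positive (counterclockwise) half twist $h_{\alpha_\tau}$. Positivity comes from the complex orientation, since the local model is holomorphic.

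\textbf{Step 4: the lift is a positive Dehn twist.} Near the collision the double cover is $v^2=(u-u_0)^2-c(t-\tau)$, up to a unit factor contributed by the other four branch points. This is the standard $A_1$ Lefschetz model $v^2-w^2=\epsilon$, whose vanishing cycle is $p^{-1}(\alpha_\tau)$, a circle double-covering the arc. The Picard--Lefschetz formula then gives a right-handed Dehn twist $t_{a_\tau}$. This twist agrees with the standard Birman--Hilden lift of $h_{\alpha_\tau}$: it is the lift supported in a neighborhood of $a_\tau$. The other lift differs from it by the hyperelliptic involution, which is not supported near $a_\tau$, so it is excluded.

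\textbf{Step 5: $a_\tau$ is nonseparating.} The arc $\alpha_\tau$ separates off two branch points, not three. Its preimage is therefore a nonseparating curve in the genus-two surface: cutting along it leaves a connected complement, because the complementary disk of the sphere contains four branch points and its preimage is connected.

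The main obstacle is the passage from ``$\Disc$ vanishes simply'' to ``the local branch model is exactly the Morse form $w^2=t-\tau$ with holomorphic orientation,'' together with matching the Picard--Lefschetz twist to the correct lift. I would settle both with the standard local computation on the smooth surface $\widetilde S$: the fiber singularity is a nondegenerate critical point of $\widetilde f$, so the holomorphic Morse lemma applies.
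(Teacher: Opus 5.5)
Your proposal is correct and follows essentially the same route as the paper. Simple ramification of $\rho$ gives the square-root collision of two roots and hence a positive half twist, and the local double-cover equation is a nondegenerate $A_1$ point whose Picard--Lefschetz monodromy is the right-handed twist about the nonseparating lift $p^{-1}(\alpha_\tau)$. Your Steps 1--2, which rule out a triple root and a collision with $0,1,\tau$, spell out facts the paper establishes outside this proof, via the Riemann--Hurwitz count for $\rho$ and \eqref{eq:sectioncollisions}.
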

\begin{proof}
A simple zero of the discriminant gives local roots
\[
u_\pm(t)=u_0\pm c\sqrt{t-\tau}+O(t-\tau),\qquad c\neq0.
\]
A positive circuit exchanges the two roots by a positive half twist.  The local double-cover model also shows directly that this is a positive Lefschetz singularity.  Put $s=t-\tau$ and $x=u-u_0$.  Since the ramification of $\rho$ is simple, after multiplication by a holomorphic unit the two colliding branch factors have the form $x^2-\kappa s+O(x^3,xs,s^2)$ for some nonzero constant $\kappa\in\mathbb C^\times$; the other four branch factors are nonzero at the collision.  Absorbing the resulting unit into the covering coordinate, the double-cover equation has a nondegenerate $A_1$ critical point and, after analytic changes of coordinates (and rescaling the local base coordinate by a nonzero unit), takes the standard form $XY=s$.  Thus Picard--Lefschetz theory gives a single right-handed Dehn twist.  The inverse image of the collision arc is nonseparating, so this twist is precisely $t_{a_\tau}$.
\end{proof}

\section{The three $3+3$ degenerations}
At $t=0$,
\[
q_0(u)=u(u-1)^2.
\]
The three line branch points are $0,1,0$, while the quartic points limit to $0,1,1$.  Thus the six branch points split as
\[
\{0,0,0\}\sqcup\{1,1,1\}.
\]
More precisely, one root near zero is $-t+O(t^2)$, whereas the two roots near one are
\[
1+(3\pm2\sqrt2)t+O(t^2).
\]
The following observation converts these first-order expansions into the braid statement and will also be used at $t=1$ and $t=\infty$.

\begin{figure}[ht]
\centering
\begin{tikzpicture}[scale=1.0,every node/.style={font=\small}]
\begin{scope}[xshift=-3.6cm]
\draw[rounded corners,fill=lineblue!5] (-2,-1.25) rectangle (2,1.25);
\foreach \p in {(-1.25,.25),(-.95,-.35),(-.65,.4)} \fill[lineblue] \p circle (3pt);
\foreach \p in {(.65,-.25),(.95,.38),(1.3,-.38)} \fill[quartic] \p circle (3pt);
\draw[linegreen,very thick] (0,0) ellipse (1.55 and .9);
\node at (0,-1.55) {regular nearby fiber};
\node[linegreen] at (0,1.55) {$\delta$ separates $3+3$};
\end{scope}
\draw[->,very thick] (-.8,0)--(.8,0) node[midway,above] {$t\to0$};
\begin{scope}[xshift=3.6cm]
\draw[rounded corners,fill=lineblue!5] (-2,-1.25) rectangle (2,1.25);
\foreach \p in {(-1.05,0),(-.9,.12),(-.9,-.12)} \fill[lineblue] \p circle (3pt);
\foreach \p in {(.9,0),(1.05,.12),(1.05,-.12)} \fill[quartic] \p circle (3pt);
\draw[linegreen,very thick] (0,0) ellipse (1.55 and .9);
\node at (0,-1.55) {$3+3$ collision};
\end{scope}
\end{tikzpicture}
\caption{Schematic $3+3$ degeneration.  A positive loop in the base rotates both triples.  On the six-punctured sphere the resulting class is $T_\delta^2$; Xiao's local holomorphic Lefschetz model for the relatively minimal fibration selects the separating lift $t_d$ as the upstairs monodromy.}
\label{fig:33}
\end{figure}

\begin{lemma}[Three-point scaling lemma]\label{lem:scaling}
Suppose three points in a complex disk have expansions
\[
 z_j(s)=z_0+s a_j+O(s^2),\qquad j=1,2,3,
\]
with $a_1,a_2,a_3$ distinct.  For sufficiently small $|s|$, a positive circuit of $s$ about $0$ gives the positive full twist on the three points, equivalently the positive Dehn twist about the boundary of a disk containing them.  Indeed, the homotopy
\[
 z_j^{(r)}(s)=z_0+s a_j+r\,O(s^2),\qquad 0\le r\le1,
\]
remains in the configuration space of three distinct points for $|s|$ small.  Hence the braid is isotopic to $z_0+s a_j$.  As $s=\varepsilon e^{i\theta}$, $0\le\theta\le2\pi$, this is one positive rigid rotation, namely the central full twist $\Delta_3^2$.
\end{lemma}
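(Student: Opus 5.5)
The plan is to prove Lemma~\ref{lem:scaling} in three steps. First make the homotopy explicit and show it stays in the configuration space. Then identify the braid of the linear motion $z_0+sa_j$. Finally pass from braids to mapping classes.

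\textbf{Step 1: the homotopy.} Write $z_j(s)=z_0+sa_j+s^2g_j(s)$ with $g_j$ holomorphic, hence bounded, say $|g_j|\le M$ on $|s|\le\varepsilon_0$. Set
\[
\mu=\min_{j\neq k}|a_j-a_k|>0,
\]
which is positive because the $a_j$ are distinct. For $0\le r\le1$ define $z_j^{(r)}(s)=z_0+sa_j+r\,s^2g_j(s)$. For $j\neq k$ we get
\[
|z_j^{(r)}(s)-z_k^{(r)}(s)|\ \ge\ |s|\,\bigl(\mu-2M|s|\bigr),
\]
which is positive on the circle $|s|=\varepsilon$ whenever $0<\varepsilon<\min(\varepsilon_0,\mu/2M)$. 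Also, for $|s|$ small all three points stay inside a small disk about $z_0$, away from any other marked points in the ambient fiber. So the family over $r\in[0,1]$ is an isotopy of closed braids, and the braid of the actual motion is conjugate to (indeed equal to) the braid of the linear motion $z_0+sa_j$.

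\textbf{Step 2: the linear motion.} For $s=\varepsilon e^{i\theta}$, the configuration $\{z_0+\varepsilon e^{i\theta}a_j\}$ is the fixed configuration $\{z_0+\varepsilon a_j\}$ rigidly rotated by angle $\theta$ about $z_0$. As $\theta$ runs from $0$ to $2\pi$ this is one full counterclockwise rotation. In $B_3$ a full counterclockwise rotation of the points is the positive generator $\Delta_3^2$ of the center. I will verify this by conjugating the configuration to three points on a round circle, which is possible since the configuration space is connected and the rotation commutes with such isotopies, and then noting that one full rotation equals $(\sigma_1\sigma_2)^3$.

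\textbf{Step 3: mapping classes.} Under the standard isomorphism to the mapping class group of the disk (or its image in the punctured-sphere group), $\Delta_3^2$ corresponds to the positive Dehn twist about a curve parallel to the boundary of a disk containing the three points. This is the standard chain relation $(\sigma_1\sigma_2)^3=T_{\partial}$ for $B_3$. The sign convention must match the one used in Proposition~\ref{prop:half}, where a positive loop gives a positive half twist.

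I expect Step 2 to be the main point of care. The subtlety is the orientation convention identifying rigid counterclockwise rotation with the positive full twist and a right-handed boundary twist, and keeping it consistent with the half-twist convention earlier in the paper. The analytic estimate in Step 1 is routine.
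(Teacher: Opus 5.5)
Your proposal is correct and follows the same route as the paper. You deform the motion to its linear part $z_0+sa_j$ through configurations of distinct points, then observe that $s=\varepsilon e^{i\theta}$ is one rigid counterclockwise rotation, i.e.\ $\Delta_3^2$, which is the positive boundary twist; you simply make explicit what the paper leaves implicit, namely the separation bound $|s|(\mu-2M|s|)>0$ and the reduction to a standard configuration where the rotation is visibly $(\sigma_1\sigma_2)^3$.
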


\begin{proposition}\label{prop:sep}
Let $\delta_0,\delta_1,\delta_\infty$ be circles on the six-punctured quotient sphere separating the two three-point clusters at $0,1,\infty$.  With the complex orientation on the base, the corresponding local spherical monodromies are
\[
T_{\delta_0}^2,\qquad T_{\delta_1}^2,\qquad T_{\delta_\infty}^2.
\]
For Xiao's relatively minimal fibration, the local holomorphic Lefschetz models determine the corresponding upstairs monodromies to be the positive separating twists
\[
t_{d_0},\qquad t_{d_1},\qquad t_{d_\infty},
\]
where $d_j$ is the connected inverse image of $\delta_j$.  Equivalently, each $t_{d_j}$ is the preimage selected by the actual holomorphic degeneration among the two mapping classes lying over $T_{\delta_j}^2$.
\end{proposition}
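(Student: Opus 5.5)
The plan has two parts. First we compute the braid of the six branch points around each of $0,1,\infty$. Then we identify the genus-two lift from the local holomorphic model.

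\emph{Braid computation at $t=0$.} Near $t=0$ the six branch points split into two clusters: $\{0,t,-t+O(t^2)\}$ near $u=0$ and $\{1,\,1+(3\pm2\sqrt2)t+O(t^2)\}$ near $u=1$. In each cluster the linear coefficients are distinct: $(0,1,-1)$ and $(0,3+2\sqrt2,3-2\sqrt2)$. Lemma~\ref{lem:scaling} applied to each cluster with $s=t$ gives a positive full twist $\Delta_3^2$ on each triple. The two clusters stay a bounded distance apart, so the motions commute and can be performed in disjoint disks $D_0,D_1$.

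On the six-punctured sphere $\mathbb P^1_u$, the full twist on the points in $D_0$ is $T_{\partial D_0}$, and the full twist in $D_1$ is $T_{\partial D_1}$. Both boundary curves are isotopic to $\delta_0$ in the punctured sphere, so the spherical monodromy is $T_{\delta_0}^2$. At $t=1$ we proceed the same way: expand $q_t$ at $t=1$ using \eqref{eq:sectioncollisions} to locate the triples and check that the first-order coefficients are distinct. At $t=\infty$ we pass to $s=1/t$ and rescale the fiber coordinate (e.g.\ $u=tw$ or a M\"obius change) so that the six points again form two triples with distinct linear coefficients in $s$. The orientation is preserved because $s=1/t$ is holomorphic, so positive loops go to positive loops. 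If some first-order coefficients coincide, we would instead go to second order and use the same straight-line homotopy argument.

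\emph{Lifting to genus two.} The preimage $d_j=p^{-1}(\delta_j)$ is connected, since $\delta_j$ encloses an odd number of branch points, and it separates the genus-two surface into two genus-one pieces. Under the Birman--Hilden correspondence the lifts of $T_{\delta_j}^2$ are $t_{d_j}$ and $\iota\, t_{d_j}$, where $\iota$ is the hyperelliptic involution. To decide between them we use the local model. In the relatively minimal fibration $f$, obtained after contracting the vertical $(-1)$-curve over $t=j$, the singular fiber is two elliptic curves meeting transversally at one node. This is Xiao's model, and we can check it from the Euler characteristic: $e(S)=e(\widetilde S)-3$, and each $3+3$ fiber contributes exactly one node.

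Near that node the fibration is $XY=s$, up to a holomorphic unit. Picard--Lefschetz theory then gives the monodromy as a single right-handed Dehn twist about the vanishing cycle, and that vanishing cycle is the curve pinched to the node, namely $d_j$. So the lift is $t_{d_j}$ and not $\iota t_{d_j}$. The positivity and the orientation agree with the complex orientation used in the first part.

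\emph{Main obstacle.} The delicate step is confirming that the relatively minimal fiber really has a single $A_1$ node, with no extra components and no non-reduced structure, after contracting the ramification $(-1)$-curve. My first approach would be a local computation in the blowup chart at the triple point $P_j$. Locally the double cover is $w^2=s\cdot g$, with the exceptional or vertical line giving a $(-1)$-curve. Contracting it should leave a single ordinary double point joining the two elliptic halves. As a global consistency check, Endo's signature formula with counts $(4,3)$ should give $\sigma=-4$, matching $K^2-8\chi=-6-8\cdot 0+3=-3$ only after the correct corrections. If needed, a cleaner check is $e(S)=7+4\cdot(-2)\cdot(-1)$ style counting: $e(S)=4-4\cdot2+7=3$ and $c_1^2(S)=-3$, so $\chi=0$, which is consistent.
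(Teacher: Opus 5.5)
Your proposal is correct and follows essentially the paper's route: the scaling lemma on both three-point clusters at $0$, $1$ and $\infty$ (with $s=1/t$ and fiber coordinate $u/t=y/z$ at infinity) gives $T_{\delta_j}^2$, and the choice between $t_{d_j}$ and $\iota t_{d_j}$ is made by the single reduced node $xy=s$ of the relatively minimal fiber. The one difference is that the paper takes this node from Xiao and Akhmedov--Monden, while your Euler count verifies it directly; the count is valid because in genus two a relatively minimal singular fiber contributing exactly $1$ to $e$ is reduced with one node, and your blowup-chart computation would also do. One slip in your side checks: Endo's formula for $(n,s)=(4,3)$ gives $\sigma=-\tfrac{12}{5}-\tfrac{3}{5}=-3$, not $-4$, consistent with $K_S^2-8\chi=-3$.
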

\begin{proof}
We begin with the spherical braid and its sign.

At $t=0$, the three line branch points are $0,1,t$.  The roots of $q_t$ have expansions
\[
 -t+O(t^2),\qquad
 1+(3+2\sqrt2)t+O(t^2),\qquad
 1+(3-2\sqrt2)t+O(t^2).
\]
Thus the cluster at $0$ has relative coordinates
\[
 0,\quad t,\quad -t+O(t^2),
\]
while the cluster at $1$ has relative coordinates
\[
 0,\quad (3+2\sqrt2)t+O(t^2),\quad (3-2\sqrt2)t+O(t^2).
\]
Lemma~\ref{lem:scaling} shows that a positively oriented loop in $t$ gives one positive full twist in each cluster.

At $t=1$, put $s=t-1$.  The two roots near $0$ and the root near $1$ are
\[
 (3+2\sqrt2)s+O(s^2),\qquad
 (3-2\sqrt2)s+O(s^2),\qquad
 1-s+O(s^2),
\]
respectively.  Together with the fixed branch points $0,1,1+s$, the two clusters again have the form $z_0+s\{a_1,a_2,a_3\}+O(s^2)$ with distinct coefficients, so Lemma~\ref{lem:scaling} gives one positive full twist for each cluster.

At $t=\infty$, use the holomorphic local coordinate $s=1/t$ and the fiber coordinate $v=y/z$.  Restricting the quartic gives
\[
Q_s(v)=C(s,v,1)
=s^3v+s^3-2s^2v^2+2s^2v-2s^2+s v^3-sv^2-sv+s-2v^3+4v^2-2v.
\]
The three line branch points are $v=0,s,1$.  The roots of $Q_s$ satisfy
\[
 \frac12s+O(s^2),\qquad
 1+is+O(s^2),\qquad
 1-is+O(s^2).
\]
Hence the clusters at $0$ and $1$ again satisfy Lemma~\ref{lem:scaling}.  Since $s=1/t$ is a holomorphic coordinate at infinity, a positive small loop about $\infty$ is a positive loop in $s$, so the twists are positive here as well.

For any of the three special values, choose disjoint disks containing the two clusters.  Their boundary circles are isotopic in the six-punctured sphere: they cobound an annulus containing no punctures.  Each cluster full twist is therefore the same positive Dehn twist $T_\delta$ about this isotopy class, and the total spherical monodromy is
\[
 T_\delta T_\delta=T_\delta^2.
\]

It remains to identify the lift.  Because $\delta$ encloses three branch points, the monodromy of the double cover along $\delta$ is nontrivial, so $d=p^{-1}(\delta)$ is connected.  The double cover of either complementary three-punctured disk has Euler characteristic $2-3=-1$ and one boundary component, hence is a one-holed torus.  Thus $d$ is separating.  On an annular neighborhood of $d$ choose coordinates in which the covering is
\[
 p:(\theta,r)\longmapsto(2\theta,r).
\]
A positive Dehn twist upstairs is $(\theta,r)\mapsto(\theta+2\pi\phi(r),r)$; downstairs it becomes
\[
 (2\theta,r)\longmapsto(2\theta+4\pi\phi(r),r),
\]
which is precisely $T_\delta^2$.  Thus $q(t_d)=T_\delta^2$.

To determine which of the two lifts is realized, one must use the local Lefschetz model, not merely the Birman--Hilden quotient.  The calculation above determines the spherical class $T_\delta^2$ directly from the branch motion; the choice between its two genus-two lifts uses the local holomorphic model of Xiao's relatively minimal fibration.  In Xiao's construction, and in the branch-cover description recalled in \cite[Example~8]{AM}, after the vertical $(-1)$-curves are contracted each of the values $0,1,\infty$ contributes a single ordinary node; see \cite{Xiao} and \cite[Example~8]{AM}.  In holomorphic coordinates centered at that node and in a positively oriented local coordinate $s$ on the base, the map has the standard form
\[
 xy=s
\]
(up to multiplication of $s$ by a nonzero holomorphic unit).  Picard--Lefschetz theory therefore gives a single right-handed Dehn twist about the vanishing cycle.  The branch-cluster analysis above identifies that vanishing cycle with the connected lift $d=p^{-1}(\delta)$: the two one-holed tori lie on opposite sides of the neck that is pinched as $s\to0$.  Hence the actual local monodromy is $t_d$.  The alternative lift $\iota t_d$ has the same spherical image but is not the local monodromy of this single nodal smoothing.  The following proposition makes this lift discrepancy explicit in standard Artin-chain coordinates.
\end{proof}

\subsection*{The lift ambiguity for a $3+3$ braid}
The geometric Lefschetz lift differs from the lift obtained by applying the Artin-chain homomorphism generator by generator.  This distinction already appears in the local $3+3$ calculation.  On a standard six-punctured sphere choose consecutive arcs $\eta_1,\ldots,\eta_5$ and let $c_i$ be their lifts to the genus-two double cover.  Put $T_i=t_{c_i}$ and let
\[
 \Phi:B_6\longrightarrow\Mod(\Sigma_2),\qquad \Phi(\sigma_i)=T_i,
\]
be the standard Artin-chain homomorphism.  Let $d$ be the connected lift of a circle $\delta$ enclosing the first three branch points.

\begin{proposition}[The $3+3$ lift discrepancy]\label{prop:deckcorrection-main}
For
\[
 \Omega=(\sigma_1\sigma_2)^3(\sigma_4\sigma_5)^3,
\]
one has
\begin{equation}\label{eq:main-rawlift}
 \Phi(\Omega)=(T_1T_2)^3(T_4T_5)^3=\iota\,t_d,
\end{equation}
where $\iota$ is the hyperelliptic involution, whereas the actual local Lefschetz monodromy is
\begin{equation}\label{eq:main-geometricseplift}
 t_d=(T_1T_2)^6=(T_4T_5)^6.
\end{equation}
Thus the natural generator-by-generator lift of the disk braid is not the geometric Lefschetz lift.
\end{proposition}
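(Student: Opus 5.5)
The plan is to reduce everything to the two-chain relation on one-holed tori and then identify the remaining discrepancy as a periodic mapping class by its homology action.

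\emph{Step 1: the chain relation.} The curves $c_1,c_2$ meet once, and a regular neighborhood of $c_1\cup c_2$ is a one-holed torus. Its boundary is the lift of a circle enclosing $\eta_1\cup\eta_2$, i.e.\ the first three branch points. So its boundary is $d$, and the two-chain relation gives $(T_1T_2)^6=t_d$. The same argument applied to $c_4,c_5$, whose neighborhood is the complementary one-holed torus with the same boundary $d$, gives $(T_4T_5)^6=t_d$. This proves \eqref{eq:main-geometricseplift} as an identity in $\Mod(\Sigma_2)$. That $t_d$ is the actual local monodromy is exactly Proposition~\ref{prop:sep}.

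\emph{Step 2: a square root of $t_d^2$.} Put $X=(T_1T_2)^3(T_4T_5)^3$. The two factors are supported in disjoint subsurfaces, so they commute, and each commutes with $t_d$. Hence
\[
X^2=(T_1T_2)^6(T_4T_5)^6=t_d^2 .
\]
Set $Y=X\,t_d^{-1}$. Since $X$ commutes with $t_d$, we get $Y^2=X^2t_d^{-2}=1$. So $Y$ has order dividing $2$.

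\emph{Step 3: identify $Y$ with $\iota$.} On homology $t_d$ acts trivially, because $d$ is separating. On $H_1(\Sigma_2)=\langle a_1,b_1\rangle\oplus\langle a_2,b_2\rangle$, the element $(T_1T_2)^3$ acts as $\begin{pmatrix}1&1\\0&1\end{pmatrix}\begin{pmatrix}1&0\\-1&1\end{pmatrix}$ cubed, which is $-I$ on the first summand and the identity on the second. Likewise $(T_4T_5)^3$ acts as $-I$ on the second summand only. Thus $Y$ acts as $-I$ on $H_1$, and in particular $Y\neq1$.

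By Nielsen realization (Kerckhoff, or directly for cyclic groups), $Y$ is represented by an orientation-preserving involution $\sigma$ of $\Sigma_2$. The Lefschetz number is $2-\operatorname{tr}(-I_4)=6$, so $\sigma$ has six fixed points. Riemann--Hurwitz then shows $\Sigma_2/\sigma$ is a sphere. Hence $\sigma$ is a hyperelliptic involution, and in genus two this class is unique and central: $Y=\iota$. Therefore $X=\iota\,t_d$, which is \eqref{eq:main-rawlift}.

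Since $\iota\neq1$, the raw lift $\Phi(\Omega)$ differs from the geometric monodromy $t_d$. As a consistency check, both elements map to the same spherical class: the image of $\Omega$ is $\Delta_3^2\Delta_3^2$ on the two clusters, which is $T_\delta^2$ as in Proposition~\ref{prop:sep}.

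The main obstacle is Step 3. The homology computation alone only shows that $Y$ and $\iota$ agree modulo the Torelli group. The finite-order argument (Nielsen realization plus the fixed-point count) is what upgrades this to an equality of mapping classes. If one prefers to avoid Nielsen realization, the alternative is an explicit check using $\iota=T_1T_2T_3T_4T_5^2T_4T_3T_2T_1$ and braid relations, but that is messier.
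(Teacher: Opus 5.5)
Your proof is correct, but it reaches $\Phi(\Omega)=\iota\,t_d$ by a different route from the paper's.

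\textbf{The paper's route.} It works through the Birman--Hilden quotient. It observes three things:
\begin{itemize}
\item $\Omega$ projects to $T_\delta^2$, because the two cluster full twists become the same twist about the common three-puncture boundary;
\item $q(t_d)=T_\delta^2$, by the annulus computation in the proof of Proposition~\ref{prop:sep};
\item $q\circ\Phi=\pi$.
\end{itemize}
Together these put $\Phi(\Omega)$ and $t_d$ in the same coset of $\ker q=\{1,\iota\}$. The homology data $[\Phi(\Omega)]=-I_4$, $[t_d]=I_4$, $[\iota]=-I_4$ then only has to choose between two candidates.

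\textbf{Your route.} You stay entirely inside $\Mod(\Sigma_2)$. The two-chain relation and the disjoint supports give $X^2=t_d^2$ with $X$ commuting with $t_d$, so $Y=Xt_d^{-1}$ is torsion of order dividing two. You then identify $Y$ with $\iota$ using $[Y]=-I_4$, Nielsen realization, the Lefschetz count of six fixed points, Riemann--Hurwitz, and the uniqueness and centrality of the hyperelliptic involution in genus two.

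\textbf{What each buys.} Your argument never needs the spherical image of $\Omega$, the compatibility $q\circ\Phi=\pi$, or the Birman--Hilden kernel; your closing consistency check recovers the spherical statement afterwards. The price is Nielsen's realization theorem and the classification of hyperelliptic involutions. The paper's argument is shorter given the machinery it has already set up. It also makes the conceptual point of the proposition explicit: $\Phi(\Omega)$ and $t_d$ are the two lifts of the same spherical class $T_\delta^2$, which is exactly the lift ambiguity the paper needs later.

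Both arguments take from Proposition~\ref{prop:sep} the claim that $t_d$, rather than $\iota\,t_d$, is the actual local Lefschetz monodromy.
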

\begin{proof}
The two factors $(\sigma_1\sigma_2)^3$ and $(\sigma_4\sigma_5)^3$ are the positive full twists on the two three-point clusters.  On the six-punctured sphere both become the same positive twist $T_\delta$ about the common three-puncture boundary, so $\Omega$ projects to $T_\delta^2$, in agreement with Proposition~\ref{prop:sep}.  The two-chain relation on either one-holed torus bounded by $d$ gives
\[
 (T_1T_2)^6=t_d=(T_4T_5)^6.
\]
Choose a symplectic basis adapted to the two sides of $d$.  On the first handle,
\[
 [T_1]=\begin{pmatrix}1&1\\0&1\end{pmatrix},\qquad
 [T_2]=\begin{pmatrix}1&0\\-1&1\end{pmatrix},
\]
so $[(T_1T_2)^3]=-I_2$; the analogous computation on the second handle gives $[(T_4T_5)^3]=-I_2$.  Hence $[\Phi(\Omega)]=-I_4$, whereas the separating twist $t_d$ acts trivially on $H_1$.  Both classes project to $T_\delta^2$, and the kernel of the Birman--Hilden quotient is $\{1,\iota\}$ with $[\iota]=-I_4$.  Therefore $\Phi(\Omega)=\iota t_d$.  Proposition~\ref{prop:sep} identifies $t_d$, not $\iota t_d$, as the monodromy of the local holomorphic node.
\end{proof}

\section{The ordered geometric global monodromy and an explicit geometric basis}\label{sec:ordered}
Let
\[
B=\mathbb P^1\setminus\{0,1,\infty,\tau_L,\tau_R,\tau_U,\tau_D\}.
\]
Fix the regular value
\[
t_*=\frac14.
\]
At $t_*$ the three moving roots of $q_{t_*}$ are approximately
\[
-0.0885907339,\qquad1.0420816704,\qquad3.0465090635.
\]
Together with $0,t_*,1$, this gives six distinct real branch points.  We label them
\[
b_1<b_2<b_3<b_4<b_5<b_6.
\]
Thus
\[
(b_1,b_2,b_3,b_4,b_5,b_6)
=(-0.0885907,0,1/4,1,1.0420817,3.0465091).
\]
Choose a geometric basis of positively oriented based loops
$\lambda_1,\ldots,\lambda_7$ around the seven critical values, with
\[
\lambda_1\cdots\lambda_7=1
\quad\text{in }\pi_1(B,t_*).
\]
Parallel transport along the chosen vanishing paths places all seven vanishing cycles on one fixed labeled genus-two fiber.

\begin{proposition}[Global product relation]\label{prop:globalproduct}
For the above geometric basis the genus-two monodromies satisfy
\[
\mu_1\cdots\mu_7=1\qquad\text{in }\Mod(\Sigma_2).
\]
The seven factors consist of the four nonseparating twists
$t_{a_L},t_{a_R},t_{a_U},t_{a_D}$ and the three separating twists
$t_{d_0},t_{d_1},t_{d_\infty}$.
\end{proposition}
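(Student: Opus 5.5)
The product relation comes from the fundamental group of the punctured base, and the identification of the factors comes from the local results already proved.

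\textbf{Step 1: the product relation.} The base $\mathbb P^1$ is a sphere, and $B$ is the complement of the seven critical values. For a geometric basis, the product $\lambda_1\cdots\lambda_7$ bounds the disk complementary to a small neighbourhood of $t_*$. We normalize the basis so that this product is trivial in $\pi_1(B,t_*)$, as stated.

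Over $B$ the map $f\colon S\to\mathbb P^1$ restricts to a smooth genus-two bundle. Indeed, the six branch points are distinct away from the seven values; this follows from \eqref{eq:sectioncollisions}, the discriminant \eqref{eq:disc}, and the simple-root check at $t=\frac12$. The bundle therefore has a monodromy homomorphism
\[
\pi_1(B,t_*)\to\Mod(\Sigma_2).
\]
Any homomorphism carries $\lambda_1\cdots\lambda_7=1$ to $\mu_1\cdots\mu_7=1$, where $\mu_i$ is the image of $\lambda_i$.

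To fix conventions, identify the fiber over $t_*$ with $\Sigma_2$. Here $\Sigma_2$ is the double cover of $\mathbb P^1_u$ branched at $b_1,\dots,b_6$, equivalently the curve \eqref{eq:family} at $t=\frac14$. We use the same composition convention for loops and mapping classes, so the monodromy map is a homomorphism, not an anti-homomorphism. We also use the relatively minimal model. The three vertical $(-1)$-curves of Proposition~\ref{prop:algconstruction} lie only in the fibers over $0,1,\infty$, so contracting them does not change the smooth bundle over $B$.

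\textbf{Step 2: identifying each factor.} Each $\lambda_i$ has the form $\gamma_i\,\partial D_i\,\gamma_i^{-1}$, where $\gamma_i$ is a path from $t_*$ to the boundary of a small disk $D_i$ around the $i$-th critical value. Its monodromy is therefore the parallel transport along $\gamma_i$ of the local monodromy around that value. Conjugating a positive Dehn twist by a diffeomorphism gives the positive Dehn twist about the image curve. So it suffices to identify the local monodromy and transport its vanishing curve back to the fiber over $t_*$.

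For the four roots of $R$, Proposition~\ref{prop:half} gives a positive twist about a nonseparating curve. Transporting it gives $t_{a_\tau}$, where $a_\tau$ is the lift of the transported collision arc $\alpha_\tau$.

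For $0$, $1$ and $\infty$, Proposition~\ref{prop:sep} gives the positive separating twist $t_{d_j}$. We use its geometric choice of lift, not the generator-by-generator lift, which differs by $\iota$ according to Proposition~\ref{prop:deckcorrection-main}. The curve $d_j$ is the lift of the transported curve $\delta_j$, and it is still separating.

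\textbf{Step 3: the main obstacle, consistency of the lifts under transport.} Parallel transport of the branch configuration determines only a spherical braid. Its lift to $\Sigma_2$ is ambiguous up to $\iota$. This ambiguity is harmless: $\iota$ is central and fixes every curve up to isotopy, so $\iota t_c\iota^{-1}=t_c$ and conjugating by either lift gives the same twist.

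The transport should be carried out in the genus-two bundle itself, not by lifting braids. Done this way, the $\mu_i$ are honestly the images of the $\lambda_i$ under a single homomorphism. Combined with Step~1, this gives the relation with no stray factor of $\iota$.

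As a consistency check, take the Birman--Hilden quotient. The spherical images, four half twists and three classes $T_\delta^2$, multiply to $1$ in $\Mod(S^2,6)$, because the braid monodromy respects the same relation in $\pi_1(B,t_*)$.
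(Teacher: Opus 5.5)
Your proposal is correct and follows the paper's own argument. Over $B$ the relatively minimal fibration is a smooth genus-two bundle, and its monodromy homomorphism carries $\lambda_1\cdots\lambda_7=1$ to $\mu_1\cdots\mu_7=1$; Propositions~\ref{prop:half} and~\ref{prop:sep} then identify the factors, and your added remarks (distinct branch points over $B$, the vertical curves lying over $0,1,\infty$, centrality of $\iota$ making the transport conjugation well defined) only spell out what the paper's short proof leaves implicit.
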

\begin{proof}
Over $B$ the relatively minimal fibration is a smooth surface bundle, so parallel transport gives
\[
\rho:\pi_1(B,t_*)\longrightarrow\Mod(\Sigma_2).
\]
Applying $\rho$ to the relation $\lambda_1\cdots\lambda_7=1$ gives the product relation.  Propositions~\ref{prop:half} and~\ref{prop:sep} identify the seven local factors and their signs.
\end{proof}

Thus every geometric basis gives a positive factorization of type $(4,3)$.  For a fixed identification of the reference fiber, two distinguished systems are related by Hurwitz moves; changing the identification also allows simultaneous conjugation.

For the explicit calculation, fix the star system below.  Appendix~\ref{app:independent-check} records another collection of paths for comparison.

Put
\[
 z=\frac{t}{t+3i},\qquad z_*=\frac{t_*}{t_*+3i},\qquad t_*=\frac14.
\]
This is a M\"obius coordinate on the base; in particular $t=\infty$ is the finite point $z=1$.  For each critical value $\xi\in\{0,1,\infty,\tau_L,\tau_R,\tau_U,\tau_D\}$, let $r_\xi$ be the straight segment from $z_*$ to $z(\xi)$.  Choose
\[
 \rho_\xi=\frac1{10}\min\Bigl(\{|z(\xi)-z(\zeta)|:\zeta\ne\xi\}\cup
 \{\operatorname{dist}(z(\xi),r_\zeta):\zeta\ne\xi\}\Bigr),
\]
where in the second set the distance is taken only to those nonincident straight segments for which the displayed distance is positive.  Starting at $z_*$, follow $r_\xi$ to the first point of the circle $|z-z(\xi)|=\rho_\xi$, traverse that circle once counterclockwise, and return along the reverse straight segment.  Denote the resulting based meridian by $\lambda_\xi$.  Any smaller positive choice of the radii gives the same based homotopy classes.  Near $z_*$ the seven outgoing arcs may be separated by an arbitrarily small isotopy, preserving their cyclic order, to obtain a distinguished geometric basis.  The braid elements attached to the individual based meridians are unchanged by this separation.

To six decimal places, the arguments of the displacement vectors $z(\xi)-z_*$, measured counterclockwise from the positive real axis, are
\[
\begin{array}{c|rrrrrrr}
\xi&\tau_U&\infty&\tau_L&0&\tau_D&1&\tau_R\\ \hline
\arg(z(\xi)-z_*)&3.776^\circ&4.764^\circ&93.046^\circ&94.764^\circ&223.306^\circ&293.199^\circ&294.731^\circ.
\end{array}
\]
All seven arguments are distinct.  Hence the open straight segments from $z_*$ are pairwise disjoint; in particular no such segment passes through another critical value.  Since there are only finitely many segments and endpoints, the terminal circles can be chosen mutually disjoint and disjoint from every nonincident segment.  A standard arbitrarily small separation of the seven arcs in a neighborhood of $z_*$ therefore produces a distinguished star system without changing their cyclic order.

Starting immediately after the ray to $\tau_D$ and proceeding counterclockwise, that cyclic order is
\begin{equation}\label{eq:orderedbasis}
 \lambda_D,\ \lambda_1,\ \lambda_R,\ \lambda_U,\ \lambda_\infty,\ \lambda_L,\ \lambda_0.
\end{equation}
With the standard boundary convention we therefore have
\[
 \lambda_D\lambda_1\lambda_R\lambda_U\lambda_\infty\lambda_L\lambda_0=1
 \quad\text{in }\pi_1\!\left(\mathbb P^1\setminus\{0,1,\infty,\tau_L,\tau_R,\tau_U,\tau_D\},t_*\right).
\]
Here the displayed word records the positively oriented meridians in the counterclockwise order of the outgoing paths at the basepoint, with the boundary orientation convention used above.  This is the usual product relation for a punctured sphere.

\begin{figure}[ht]
\centering
\begin{tikzpicture}[scale=1.15,every node/.style={font=\scriptsize}]
  \coordinate (O) at (0,0);
  \draw[gray!55] (O) circle (1.75);
  \draw[->,line width=.65pt] (35:2.05) arc[start angle=35,end angle=145,radius=2.05];
  \node at (90:2.30) {counterclockwise};

  \draw[gray!70,line width=.6pt] (O)--(3.776:1.75);
  \draw[gray!70,line width=.6pt] (O)--(4.764:1.35);
  \draw[gray!70,line width=.6pt] (O)--(93.046:1.75);
  \draw[gray!70,line width=.6pt] (O)--(94.764:1.35);
  \draw[gray!70,line width=.6pt] (O)--(223.306:1.75);
  \draw[gray!70,line width=.6pt] (O)--(293.199:1.35);
  \draw[gray!70,line width=.6pt] (O)--(294.731:1.75);

  \fill[black] (O) circle (2.2pt) node[below left=2pt] {$z_*$};
  \fill[quartic] (3.776:1.75) circle (2.2pt) node[right=3pt] {$\tau_U$};
  \fill[lineorange] (4.764:1.35) circle (2.2pt) node[below right=3pt] {$\infty$};
  \fill[lineblue] (93.046:1.75) circle (2.2pt) node[above=3pt] {$\tau_L$};
  \fill[linegreen] (94.764:1.35) circle (2.2pt) node[left=4pt] {$0$};
  \fill[quartic] (223.306:1.75) circle (2.2pt) node[below left=3pt] {$\tau_D$};
  \fill[linered] (293.199:1.35) circle (2.2pt) node[left=4pt] {$1$};
  \fill[lineblue] (294.731:1.75) circle (2.2pt) node[below right=3pt] {$\tau_R$};
\end{tikzpicture}
\caption{Angular diagram of the outgoing paths of the fixed star system at $z_*$.  The seven directions are the arguments listed above; the radii are staggered only to separate the labels.  Starting immediately after the ray to $\tau_D$ and moving counterclockwise gives $\lambda_D,\lambda_1,\lambda_R,\lambda_U,\lambda_\infty,\lambda_L,\lambda_0$, which is the order used in \eqref{eq:orderedbasis}.}
\label{fig:star-system}
\end{figure}

Let
\[
 \mathcal W_j:=\rho(\lambda_j)\in\Mod(\Sigma_2),
\]
where $\rho$ is the monodromy representation of the actual double-cover family and the indices are taken in the order \eqref{eq:orderedbasis}.  These are the geometric monodromies obtained by parallel transport.  The braid calculation below puts them in explicit Artin coordinates.

\begin{theorem}[Ordered geometric monodromy relation for the fixed star system]\label{thm:orderedexplicit}
For the distinguished basis \eqref{eq:orderedbasis}, the seven actual local monodromies satisfy
\begin{equation}\label{eq:orderedidentity}
 \boxed{\mathcal W_D\mathcal W_1\mathcal W_R\mathcal W_U\mathcal W_\infty\mathcal W_L\mathcal W_0=1\quad\text{in }\Mod(\Sigma_2).}
\end{equation}
They consist of four positive nonseparating twists and three positive separating twists.
\end{theorem}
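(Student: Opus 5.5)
The boxed identity is formal: it is the monodromy representation applied to a relation in the fundamental group of the base. The content of the theorem lies in two places. First, the star system has to actually be a distinguished basis with the stated product word. Second, each local factor has to be identified with the correct positive Dehn twist, including the correct lift at the $3+3$ values.

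\textbf{Step 1: the relation in $\pi_1(B,t_*)$.} Work in the M\"obius coordinate $z=t/(t+3i)$, which sends $\infty$ to the finite point $z=1$. Then $B$ is $\mathbb C$ with seven points removed, together with the point at infinity, which is $t=-3i$. That point is regular, since it is none of the seven critical values.

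The tabulated arguments are pairwise distinct. Hence the straight rays from $z_*$ are disjoint except at $z_*$, and the radii $\rho_\xi$ make the terminal circles disjoint from the other paths. The meridians $\lambda_\xi$, listed in the counterclockwise order \eqref{eq:orderedbasis}, therefore form a distinguished system. Their product, in this order, is freely homotopic to a large circle enclosing all seven punctures. That circle bounds a disk around the regular point $z=\infty$, so it is null-homotopic in $B$. This gives
\[
\lambda_D\lambda_1\lambda_R\lambda_U\lambda_\infty\lambda_L\lambda_0=1,
\]
where the order convention is the one fixed before the theorem, with the start taken just after the ray to $\tau_D$.

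\textbf{Step 2: the monodromy representation.} Over $B$ the relatively minimal fibration $f$ of Proposition~\ref{prop:algconstruction} is a smooth $\Sigma_2$-bundle. Parallel transport therefore gives a homomorphism $\rho:\pi_1(B,t_*)\to\Mod(\Sigma_2)$, exactly as in Proposition~\ref{prop:globalproduct}. Applying $\rho$ to the relation of Step 1 yields \eqref{eq:orderedidentity}.

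\textbf{Step 3: identifying the factors.} Each $\mathcal W_\xi$ is the local monodromy around $\xi$, conjugated by parallel transport along the ray $r_\xi$.
\begin{itemize}
\item For $\xi\in\{\tau_L,\tau_R,\tau_U,\tau_D\}$, Proposition~\ref{prop:half} gives a positive twist $t_{a_\xi}$ about a nonseparating curve. Conjugation preserves both positivity and the nonseparating type.
\item For $\xi\in\{0,1,\infty\}$, Proposition~\ref{prop:sep} gives the positive separating twist $t_{d_\xi}$. This holds after the vertical $(-1)$-curves are contracted, so that each fiber has a single ordinary node.
\end{itemize}
This produces four nonseparating and three separating positive factors.

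\textbf{Main obstacle.} The delicate step is Step 3 at the $3+3$ values, where the lift must be pinned down. Proposition~\ref{prop:deckcorrection-main} shows that lifting the disk braid generator by generator gives $\iota t_d$ rather than $t_d$. I would not fix the lift through any braid computation. Instead I would argue directly upstairs: the local model $xy=s$ forces the monodromy to be a single right-handed twist, and conjugation by transport preserves that property.

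For the explicit Artin coordinates, I would track the six branch points numerically along each ray and around each small circle. This gives spherical braids, which I would then lift. For each separating factor, I would replace the raw lift $(T_1T_2)^3(T_4T_5)^3$ by $(T_1T_2)^6$, conjugated by the lifted transport braid. As a sanity check, the product of these seven explicit words should be $1$ in $\Mod(\Sigma_2)$, not $\iota$. This would be confirmed by computing its action on $H_1$ and on a few test curves.
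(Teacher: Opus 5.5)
Your proof is correct and follows the paper's own argument, written out in more detail: the star meridians form a geometric basis whose product is trivial in $\pi_1(B,t_*)$, parallel transport turns this into \eqref{eq:orderedidentity}, and Propositions~\ref{prop:half} and~\ref{prop:sep} identify the four nonseparating and three separating positive factors, with the separating lift fixed upstairs by the local model $xy=s$. One caveat applies only to your optional Artin-coordinate remark: the tracked separating words need not be conjugates of $\Omega$ (at $\infty$ the paper obtains $K_\infty(\sigma_2\sigma_3)^6K_\infty^{-1}$, whose raw lift is already $t_{d_\infty}$), so the paper decides each deck correction by the test $[\Phi(w_j)]=\pm I_4$ rather than by substituting $(T_1T_2)^6$ uniformly.
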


\begin{proof}
The star meridians form a geometric basis, so their product is the identity in the fundamental group of the punctured base.  Parallel transport gives a homomorphism to $\Mod(\Sigma_2)$ and hence \eqref{eq:orderedidentity}.  Proposition~\ref{prop:half} identifies the four tangency factors, and Proposition~\ref{prop:sep} identifies the three separating factors.
\end{proof}

\subsection{Vanishing cycles on the reference fiber}\label{subsec:star-vanishing-cycles}
We next place the seven vanishing cycles on the reference fiber.  On the branch sphere over $t_*=1/4$, let
\[
 b_1<b_2<b_3<b_4<b_5<b_6
\]
be the ordered branch points fixed above, and let $\eta_i$, $1\le i\le5$, be the standard embedded arc joining $b_i$ to $b_{i+1}$ and disjoint from the remaining branch points.  Write
\[
 \varpi:\Sigma_2\longrightarrow S^2
\]
for the hyperelliptic double cover of the reference fiber.

For $j\in\{D,R,U,L\}$, transport the local collision arc at $\tau_j$ back to the reference branch sphere along the corresponding path of the distinguished star system, and denote the resulting arc by $\alpha_j$.  Similarly, at $0,1,\infty$, transport the boundary of either three-point cluster back to the reference sphere and denote the resulting three-puncture circles by
\[
 \Delta_0,\qquad \Delta_1,\qquad \Delta_\infty.
\]
Their hyperelliptic lifts
\[
 a_j=\varpi^{-1}(\alpha_j),\qquad d_k=\varpi^{-1}(\Delta_k)
\]
are connected simple closed curves on $\Sigma_2$, nonseparating in the first case and separating in the second.

\begin{proposition}\label{prop:star-vanishing-curves}
For the distinguished basis \eqref{eq:orderedbasis},
\[
\begin{aligned}
 \mathcal W_D&=t_{a_D},&
 \mathcal W_1&=t_{d_1},&
 \mathcal W_R&=t_{a_R},&
 \mathcal W_U&=t_{a_U},\\
 \mathcal W_\infty&=t_{d_\infty},&
 \mathcal W_L&=t_{a_L},&
 \mathcal W_0&=t_{d_0}.
\end{aligned}
\]
Consequently \eqref{eq:orderedidentity} can be written
\[
 \boxed{t_{a_D}t_{d_1}t_{a_R}t_{a_U}t_{d_\infty}t_{a_L}t_{d_0}=1.}
\]
\end{proposition}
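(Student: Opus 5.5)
The plan is to reduce the statement to a naturality property of parallel transport combined with the local identifications already established in Propositions~\ref{prop:half} and~\ref{prop:sep}.

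Fix one meridian $\lambda_\xi$ from \eqref{eq:orderedbasis}. It consists of the straight segment $r_\xi$, shortened to the circle of radius $\rho_\xi$, then the small counterclockwise circle, then the return segment. Write $\gamma_\xi$ for the outgoing segment, from $t_*$ to a point $t_\xi'$ on that circle.

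Parallel transport along $\gamma_\xi$ in the smooth bundle over $B$ gives an identification $\phi_\xi:\Sigma_{t_*}\to\Sigma_{t_\xi'}$. Because the double cover $\widetilde S\to Y$ is fiberwise, transport can be chosen to cover the isotopy of the branch sphere determined by the motion of the six branch points along $\gamma_\xi$. In other words, the lift of the braid along $\gamma_\xi$ commutes with $\varpi$.

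The monodromy then conjugates: $\mathcal W_\xi=\phi_\xi^{-1}\circ\rho'(\text{circle})\circ\phi_\xi$, where $\rho'$ is the local monodromy based at $t_\xi'$.

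For the local pieces we invoke the earlier propositions, all in the complex orientation, which is the counterclockwise orientation in the M\"obius coordinate $z$ because $z$ is holomorphic.
\begin{itemize}
\item At a tangency value $\tau_j$, Proposition~\ref{prop:half} gives local monodromy $t_{a}$, where $a$ is the preimage of the local collision arc.
\item At $0,1,\infty$, Proposition~\ref{prop:sep} gives $t_{d}$, where $d$ is the preimage of the local cluster circle. We use the version of that proposition that selects the Lefschetz lift $t_d$ rather than $\iota t_d$.
\end{itemize}
Applying the naturality $\phi^{-1}t_c\phi=t_{\phi^{-1}(c)}$ then gives $\mathcal W_j=t_{\phi_j^{-1}(a)}$ and $\mathcal W_k=t_{\phi_k^{-1}(d)}$.

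It remains to identify $\phi_\xi^{-1}(a)$ with $\varpi^{-1}(\alpha_j)$ and $\phi_\xi^{-1}(d)$ with $\varpi^{-1}(\Delta_k)$. Since $\phi_\xi$ covers the branch-point isotopy, the inverse transport of a lift is the lift of the inverse-transported arc or circle. The arcs $\alpha_j$ and circles $\Delta_k$ were defined exactly as these transported objects. Connectedness of $d_k$ and its separating nature are invariant under the homeomorphism, as shown in Proposition~\ref{prop:sep}.

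The main obstacle is the separating case, because of the hyperelliptic ambiguity. The transported braid determines only the spherical class $T_{\Delta_k}^2$, which has two lifts, $t_{d_k}$ and $\iota t_{d_k}$ (Proposition~\ref{prop:deckcorrection-main}). We must therefore argue that the ambiguity is settled on the actual total space and not on the braid. Parallel transport is a homeomorphism of fibers of $S$ itself, and conjugating the honest local Lefschetz monodromy $t_d$ by that homeomorphism yields a Dehn twist, not $\iota$ times one. Since $\iota$ is central, the conjugate of $\iota t_d$ would be $\iota t_{\phi^{-1}d}$; this cannot occur, because the local factor is $t_d$ by Proposition~\ref{prop:sep}.

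A second subtlety is that the relatively minimal fibration differs from $\widetilde f$ only near the fibers over $0,1,\infty$, where the vertical $(-1)$-curves are contracted. Over $B$, including the paths $\gamma_\xi$, the two fibrations agree, so the transport is unaffected.

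Substituting the seven identifications into Theorem~\ref{thm:orderedexplicit} gives the boxed relation.
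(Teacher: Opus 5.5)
Your proposal is correct and follows the same route as the paper. Each local factor is identified by Propositions~\ref{prop:half} and~\ref{prop:sep}, the collision arcs and cluster circles are transported along the star paths to the reference fiber, and the order is read off from \eqref{eq:orderedbasis}. Your additional points make explicit what the paper's short proof leaves implicit: that transport lifts the branch-point isotopy, the naturality $\phi^{-1}t_c\phi=t_{\phi^{-1}(c)}$, and that conjugating the genuine local twist $t_d$ cannot produce $\iota t_d$.
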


\begin{proof}
At each of the four roots of $R$, Proposition~\ref{prop:half} identifies the local monodromy with the positive half twist about the collision arc.  Parallel transport along the chosen star path carries that arc to $\alpha_j$, and its lift is the nonseparating curve $a_j$.  At $0,1,\infty$, Proposition~\ref{prop:sep} identifies the local Lefschetz monodromy with the positive twist about the connected lift of the corresponding three-puncture circle.  Transport to the reference fiber gives $d_0,d_1,d_\infty$.  The order is the order \eqref{eq:orderedbasis}.
\end{proof}

We now compute braid representatives for the same star system, thereby putting the transported curves in standard Artin coordinates.

Along each based meridian of the fixed star system, we move straight from $z_*$ to the terminal circle, go once counterclockwise around the circle, and return along the same segment.  The small separation near $z_*$ used to display the geometric basis is not part of this continuation.  Different generic projections or parametrizations may change the written Artin word, but not the braid it represents.  For the real projection we use the disk coordinate
\begin{equation}\label{eq:wcoord}
 w=\frac{1}{u-20i},
\end{equation}
which sends $u=\infty$ to $w=0$.  We make the crossing convention explicit.  Along a parametrized path let $w_p(s),w_q(s)$ be two adjacent trajectories in this disk chart, and put
\[
 d(s)=\operatorname{Re}(w_p(s)-w_q(s)),\qquad
 h(s)=\operatorname{Im}(w_p(s)-w_q(s)).
\]
At a transverse projected crossing $s=s_0$, so that $d(s_0)=0$ and $d'(s_0)\ne0$, we record the exponent
\begin{equation}\label{eq:crossing-sign}
 \varepsilon=-\operatorname{sgn}\bigl(d'(s_0)h(s_0)\bigr)\in\{+1,-1\}.
\end{equation}
This convention makes a positive geometric half twist the standard Artin generator $\sigma_i$.  Following the roots with this convention gives the words below.  We write $j$ for $\sigma_j$ and $-j$ for $\sigma_j^{-1}$:
\begingroup\scriptsize
\begin{align*}
 w_D={}&(-5,-4,-3,1,-5,-2,-3,4,3,2,5,-1,3,4,5),\\
 w_1={}&(5,4,3,2,1,2,1,2,4,5,4,2,1,2,4,5,4,-1,-2,-3,-4,-5),\\
 w_R={}&(5,4,3,2,1,-4,-5,-2,-1,-2,-4,3,4,2,1,2,5,4,-1,-2,-3,-4,-5),\\
 w_U={}&(5,4,-1,5,-4,-3,2,3,4,-5,1,-4,-5),\\
 w_\infty={}&(5,4,5,-1,-4,-3,-2,-5,-4,-3,-5,-4,-5,-3,-4,-5,-2,-3,-4,3,2,3,3,2,-1,3,-2,-3,\notag\\
&\qquad 2,1,2,3,2,1,3,2,3,4,3,2,5,4,3,5,4,5,3,4,5,2,3,4,1,-5,-4,-5),\\
 w_L={}&(2,1,2,5,4,5,3,-5,-4,-5,-2,-1,-2),\\
 w_0={}&(2,1,2,5,4,5,2,1,2,5,4,5).
\end{align*}
\endgroup

With these choices, the four tangency words are
\[
 w_D=B_D\sigma_4B_D^{-1},\qquad
 w_R=B_R\sigma_3B_R^{-1},\qquad
 w_U=B_U\sigma_2B_U^{-1},\qquad
 w_L=B_L\sigma_3B_L^{-1},
\]
where
\begingroup\small
\[
\begin{aligned}
 B_D={}&\sigma_5^{-1}\sigma_4^{-1}\sigma_3^{-1}\sigma_1\sigma_5^{-1}\sigma_2^{-1}\sigma_3^{-1},\\
 B_R={}&\sigma_5\sigma_4\sigma_3\sigma_2\sigma_1\sigma_4^{-1}\sigma_5^{-1}
          \sigma_2^{-1}\sigma_1^{-1}\sigma_2^{-1}\sigma_4^{-1},\\
 B_U={}&\sigma_5\sigma_4\sigma_1^{-1}\sigma_5\sigma_4^{-1}\sigma_3^{-1},\\
 B_L={}&\sigma_2\sigma_1\sigma_2\sigma_5\sigma_4\sigma_5.
\end{aligned}
\]
\endgroup
The four arcs of Proposition~\ref{prop:star-vanishing-curves} are therefore represented by
\begin{equation}\label{eq:star-arcs}
 \alpha_D=B_D(\eta_4),\qquad
 \alpha_R=B_R(\eta_3),\qquad
 \alpha_U=B_U(\eta_2),\qquad
 \alpha_L=B_L(\eta_3).
\end{equation}
Their endpoint pairs are respectively
\[
 \{b_1,b_5\},\qquad \{b_5,b_6\},\qquad
 \{b_1,b_5\},\qquad \{b_1,b_6\}.
\]
In particular $\alpha_D$ and $\alpha_U$ have the same endpoints but represent different transported arcs in the punctured sphere; equation~\eqref{eq:star-arcs}, rather than the endpoint pair alone, records their isotopy classes.

The three separating factors admit an equally concrete description.  Let $\Delta_{ijk}$ denote the boundary of a disk containing precisely $b_i,b_j,b_k$.  The word $w_0$ represents the $3+3$ braid with partition
\[
 \Delta_0=\Delta_{123}.
\]
If
\[
 K_1=\sigma_5\sigma_4\sigma_3\sigma_2\sigma_1,
\]
then the displayed word $w_1$ is $K_1\Omega K_1^{-1}$, where
\[
 \Omega=(\sigma_1\sigma_2)^3(\sigma_4\sigma_5)^3.
\]
Hence
\[
 \Delta_1=K_1(\Delta_{123}),
\]
and $\Delta_1$ separates $\{b_1,b_2,b_6\}$ from $\{b_3,b_4,b_5\}$.  For the factor at infinity, Appendix~\ref{app:star-infinity-check} gives an explicit decomposition
\[
 w_\infty=K_\infty m_\infty K_\infty^{-1}
\]
and verifies, by the faithful Artin action, that $m_\infty=(\sigma_2\sigma_3)^6$ in $B_6$.  The induced permutation of $K_\infty$ sends $\{2,3,4\}$ to $\{1,3,6\}$.  Hence the transported three-puncture circle $\Delta_\infty$ separates
\[
 \{b_1,b_3,b_6\}\quad\text{from}\quad\{b_2,b_4,b_5\}.
\]
Here $(\sigma_2\sigma_3)^3=\Delta_3^2$ is the positive full twist on the three strands in positions $2,3,4$, hence represents the positive Dehn twist about the boundary of a disk containing those three punctures.  Therefore $(\sigma_2\sigma_3)^6=(\Delta_3^2)^2$ maps, under the disk-to-sphere quotient, to $T_\delta^2$.  This is the same spherical mapping class that appears in Proposition~\ref{prop:sep}; the corresponding disk-braid word need not agree with $\Omega$ in Proposition~\ref{prop:deckcorrection-main}.

Figure~\ref{fig:separating-downstairs} shows the three transported
three-puncture circles on the reference branch sphere.  The drawings are
schematic: only the labeled partition of the six branch points is relevant.

\begin{figure}[ht]
\centering
\begin{tikzpicture}[scale=.92,every node/.style={font=\scriptsize}]
\begin{scope}[xshift=0cm]
  \draw[line width=.65pt] (0,0) circle (1.72);
  \foreach \x/\lab in {-1/$b_1$,0/$b_2$,1/$b_3$}
    {\fill (\x,.62) circle (2pt) node[above=3pt] {\lab};}
  \foreach \x/\lab in {-1/$b_4$,0/$b_5$,1/$b_6$}
    {\fill (\x,-.68) circle (2pt) node[below=3pt] {\lab};}
  \draw[linered,line width=1.15pt]
    (0,.68) ellipse (1.38 and .58);
  \node[linered] at (0,1.45) {$\Delta_0$};
  \node at (0,-2.02) {$\{b_1,b_2,b_3\}\mid\{b_4,b_5,b_6\}$};
\end{scope}

\begin{scope}[xshift=5.0cm]
  \draw[line width=.65pt] (0,0) circle (1.72);
  \foreach \x/\lab in {-1/$b_1$,0/$b_2$,1/$b_6$}
    {\fill (\x,.62) circle (2pt) node[above=3pt] {\lab};}
  \foreach \x/\lab in {-1/$b_3$,0/$b_4$,1/$b_5$}
    {\fill (\x,-.68) circle (2pt) node[below=3pt] {\lab};}
  \draw[lineblue,line width=1.15pt]
    (0,.68) ellipse (1.38 and .58);
  \node[lineblue] at (0,1.45) {$\Delta_1$};
  \node at (0,-2.02) {$\{b_1,b_2,b_6\}\mid\{b_3,b_4,b_5\}$};
\end{scope}

\begin{scope}[xshift=10.0cm]
  \draw[line width=.65pt] (0,0) circle (1.72);
  \foreach \x/\lab in {-1/$b_1$,0/$b_3$,1/$b_6$}
    {\fill (\x,.62) circle (2pt) node[above=3pt] {\lab};}
  \foreach \x/\lab in {-1/$b_2$,0/$b_4$,1/$b_5$}
    {\fill (\x,-.68) circle (2pt) node[below=3pt] {\lab};}
  \draw[linegreen,line width=1.15pt]
    (0,.68) ellipse (1.38 and .58);
  \node[linegreen] at (0,1.45) {$\Delta_\infty$};
  \node at (0,-2.02) {$\{b_1,b_3,b_6\}\mid\{b_2,b_4,b_5\}$};
\end{scope}
\end{tikzpicture}
\caption{Schematic representatives of the three separating curves downstairs.
In the three panels the punctures have been repositioned by an isotopy of the
sphere so that the enclosed triple is visible; the labels, not their planar
positions, determine the partition.  The spherical monodromies are
$T_{\Delta_0}^2$, $T_{\Delta_1}^2$, and $T_{\Delta_\infty}^2$.
Since each side of $\Delta_j$ contains three branch points, its preimage under
the hyperelliptic double cover is a connected separating curve $d_j$; the two
complementary components are once-punctured tori.}
\label{fig:separating-downstairs}
\end{figure}
Thus the seven curves on the reference branch sphere are summarized by
\[
\begin{array}{c|c|c}
\text{critical value}&\text{curve downstairs}&\text{branch data}\\ \hline
\tau_D&\alpha_D=B_D(\eta_4)&b_1\leftrightarrow b_5\\
1&\Delta_1&\{b_1,b_2,b_6\}\mid\{b_3,b_4,b_5\}\\
\tau_R&\alpha_R=B_R(\eta_3)&b_5\leftrightarrow b_6\\
\tau_U&\alpha_U=B_U(\eta_2)&b_1\leftrightarrow b_5\\
\infty&\Delta_\infty&\{b_1,b_3,b_6\}\mid\{b_2,b_4,b_5\}\\
\tau_L&\alpha_L=B_L(\eta_3)&b_1\leftrightarrow b_6\\
0&\Delta_0&\{b_1,b_2,b_3\}\mid\{b_4,b_5,b_6\}.
\end{array}
\]
Taking hyperelliptic inverse images gives
\[
 a_D,\ d_1,\ a_R,\ a_U,\ d_\infty,\ a_L,\ d_0
\]
on the fixed genus-two fiber, in the order of Theorem~\ref{thm:orderedexplicit}.

The exponent sums are $1,12,1,1,12,1,12$, as expected from the four tangencies and the three $3+3$ degenerations.  Using $300,700,1500,$ and $3000$ subdivisions gives the same braid elements under the faithful Artin action $B_6\to\operatorname{Aut}(F_6)$.  The continuation is described in Appendix~\ref{app:braidcode}.

We use
\[
\Phi:B_6\to\Mod(\Sigma_2),\qquad
\pi:B_6\to\Mod(S^2,\mathbf b),\qquad
q:\Mod(\Sigma_2)\to\Mod(S^2,\mathbf b),
\]
with $q\circ\Phi=\pi$.  We write $\widehat w_j=\Phi(w_j)$ for the Artin-chain lift of a printed disk-braid word.  This need not equal the geometric monodromy $\mathcal W_j$ at a $3+3$ value; the two possible lifts differ by the central involution $\iota$.

Let $\widehat w_j=\Phi(w_j)$.  At the three $3+3$ values the deck-involution correction can be determined explicitly.  Put
\[
 \Theta:=T_1T_2T_3T_4T_5^2T_4T_3T_2T_1=\iota,
\]
the standard five-chain expression for the genus-two hyperelliptic involution; see, for example, \cite{BH,FM}.  Exact multiplication in the symplectic representation gives
\begin{equation}\label{eq:sep-corrections}
 [\widehat w_1]=-I_4,\qquad [\widehat w_\infty]=I_4,\qquad [\widehat w_0]=-I_4.
\end{equation}
Each $\widehat w_j$ projects to the corresponding spherical class $T_{\delta_j}^2$, whereas the actual separating Lefschetz twist acts trivially on $H_1$.  Since the kernel of the Birman--Hilden quotient is $\{1,\iota\}$ and $[\iota]=-I_4$, equation \eqref{eq:sep-corrections} fixes the correction uniquely:
\begin{equation}\label{eq:literal-sep-lifts}
 \mathcal W_1=\Theta\widehat w_1,\qquad
 \mathcal W_\infty=\widehat w_\infty,\qquad
 \mathcal W_0=\Theta\widehat w_0,
\end{equation}
for the displayed words.  At the four tangencies the corresponding encoding is
\begin{equation}\label{eq:literal-nonsep-lifts}
 \mathcal W_D=\widehat w_D,\qquad \mathcal W_R=\widehat w_R,\qquad
 \mathcal W_U=\widehat w_U,\qquad \mathcal W_L=\widehat w_L.
\end{equation}
Equations \eqref{eq:literal-sep-lifts}--\eqref{eq:literal-nonsep-lifts} now give words in $T_1,\ldots,T_5$ for the seven geometric factors.

It remains to check the product of the braid words, first in the spherical quotient and then after lifting to genus two.  Put
\begin{equation}\label{eq:BH-commuting}
 q\circ\Phi=\pi.
\end{equation}
The product is not the identity in the disk braid group.  The exponent sums of the seven extracted words, in the global order, are
\[
 1,\ 12,\ 1,\ 1,\ 12,\ 1,\ 12,
\]
so for
\begin{equation}\label{eq:raw-braid-product}
 b:=w_Dw_1w_Rw_Uw_\infty w_Lw_0\in B_6
\end{equation}
one has
\begin{equation}\label{eq:raw-exponent-sum}
 \operatorname{exp}(b)=1+12+1+1+12+1+12=40.
\end{equation}
Since the abelianization $B_6\to\mathbb Z$ sends every $\sigma_i$ to $1$, equation \eqref{eq:raw-exponent-sum} shows in particular that $b\ne1$ in the ordinary disk braid group.  The monodromy relation is instead a relation after passage to the six-punctured sphere.

\begin{proposition}[Exact spherical-braid and lifted-product check]\label{prop:spherical-braid-check}
For the seven Artin strings obtained above, the product \eqref{eq:raw-braid-product} satisfies
\begin{equation}\label{eq:spherical-braid-identity}
 \pi(b)=1\qquad\text{in }\Mod(S^2,\mathbf b).
\end{equation}
Moreover its generator-by-generator hyperelliptic lift satisfies
\begin{equation}\label{eq:raw-lift-identity}
 \Phi(b)=1\qquad\text{in }\Mod(\Sigma_2).
\end{equation}
Consequently the corrected geometric Lefschetz factors satisfy
\begin{equation}\label{eq:literal-exact-product}
 \mathcal W_D\mathcal W_1\mathcal W_R\mathcal W_U\mathcal W_\infty\mathcal W_L\mathcal W_0=1
 \qquad\text{in }\Mod(\Sigma_2).
\end{equation}
\end{proposition}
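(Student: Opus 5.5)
The plan is to establish the three identities in order, using only exact finite computations on the printed words together with the Birman--Hilden exact sequence $1\to\{1,\iota\}\to\Mod(\Sigma_2)\xrightarrow{q}\Mod(S^2,\mathbf b)\to1$ and the relation $q\circ\Phi=\pi$ of \eqref{eq:BH-commuting}.

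\emph{Step 1: the spherical identity \eqref{eq:spherical-braid-identity}.} The kernel of $\pi:B_6\to\Mod(S^2,\mathbf b)$ is normally generated by the sphere relation $\sigma_1\sigma_2\sigma_3\sigma_4\sigma_5^2\sigma_4\sigma_3\sigma_2\sigma_1$ and contains the full twist $\Delta_6^2$, which is central and of order two in $\Mod(S^2,\mathbf b)$. The exponent sum $40$ is compatible with this, since the sphere relation has exponent sum $10$ and $\Delta_6^2$ has exponent sum $30$. To test membership in the kernel I would use the faithful Artin action $B_6\to\operatorname{Aut}(F_6)$, as in Appendix~\ref{app:braidcode}. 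Passing to $\pi_1(S^2\setminus\mathbf b)=F_6/\langle\!\langle x_1\cdots x_6\rangle\!\rangle$, a braid lies in $\ker\pi$ exactly when its induced automorphism of this quotient is inner. This follows from Dehn--Nielsen--Baer for the punctured sphere, together with the fact that the boundary word is fixed. So I would compute $b(x_i)$ for each $i$ and exhibit a single $g$ with $b(x_i)=g x_i g^{-1}$ in the quotient group. Alternatively, a more conceptual route is available: the geometric argument of Theorem~\ref{thm:orderedexplicit}, read downstairs on the branch sphere, already shows that the spherical monodromies multiply to $1$. Since $\pi(w_j)$ equals the transported spherical monodromy by construction of the continuation, $\pi(b)=1$ follows once the braid words are trusted. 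I would present the exact Artin check as the verification of that trust.

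\emph{Step 2: the lifted identity \eqref{eq:raw-lift-identity}.} By Step 1, $q(\Phi(b))=\pi(b)=1$, so $\Phi(b)\in\{1,\iota\}$. These two elements are distinguished by their symplectic images $I_4$ and $-I_4$. I would therefore compute $[\Phi(b)]$ exactly, as the product of the $4\times4$ integer matrices $[T_i]^{\pm1}$ along the concatenated word. A shortcut is available: the four tangency words $w_j$ are conjugates of single generators, so each $[\widehat w_j]$ is a symplectic transvection, and \eqref{eq:sep-corrections} already gives $[\widehat w_1]=[\widehat w_0]=-I_4$ and $[\widehat w_\infty]=I_4$. Since $-I_4$ is central, the two minus signs cancel. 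Hence $[\Phi(b)]=[\widehat w_D][\widehat w_R][\widehat w_U][\widehat w_L]$, after moving the central factors out. I expect the main obstacle here: one must still verify that this product of four transvections is $I_4$, and this is a genuine finite computation rather than a formal consequence. I would do it by exact matrix multiplication. As a consistency check, I would use that the spherical identity forces the product to be $\pm I_4$, and compare parities.

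\emph{Step 3: the corrected identity \eqref{eq:literal-exact-product}.} Substitute \eqref{eq:literal-sep-lifts} and \eqref{eq:literal-nonsep-lifts}. Because $\Theta=\iota$ is central of order two, the product of the $\mathcal W_j$ equals $\Theta^2\Phi(b)=\Phi(b)=1$. This recovers Theorem~\ref{thm:orderedexplicit} independently of the parallel-transport argument.
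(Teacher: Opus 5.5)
Your proposal is correct and follows the paper's proof step for step. The paper likewise (i) uses the exact Artin action on $F_6/\langle\!\langle x_1\cdots x_6\rangle\!\rangle$ to show that $b_*$ is conjugation by a single element, writing the conjugator $c$ explicitly, and applies Dehn--Nielsen--Baer; (ii) places $\Phi(b)$ in $\ker q=\{1,\iota\}$ and decides which element it is from the symplectic image, where the two factors $-I_4$ cancel and the four tangency transvections multiply to $I_4$; and (iii) uses centrality of $\Theta=\iota$. One slip in a side remark, which does not affect your argument: $\Delta_6^2$ lies in $\ker\pi$, so it is trivial in $\Mod(S^2,\mathbf b)$ and has order two only in the spherical braid group, and $\ker\pi$ is normally generated by the sphere relation together with $\Delta_6^2$, not by the sphere relation alone.
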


\begin{proof}
For \eqref{eq:spherical-braid-identity}, use the standard faithful Artin action on
\[
 F_6=\langle x_1,\ldots,x_6\rangle,
\]
with
\[
 \sigma_i:x_i\longmapsto x_ix_{i+1}x_i^{-1},\qquad
 x_{i+1}\longmapsto x_i,
\]
and all other free generators fixed.  Passing from the disk to the six-punctured sphere replaces the free group by
\[
 G:=\langle x_1,\ldots,x_6\mid x_1x_2\cdots x_6=1\rangle
 \cong \pi_1(S^2\setminus\mathbf b).
\]
Because an Artin braid preserves the boundary word $x_1\cdots x_6$ up to equality, its automorphism of $F_6$ descends to an automorphism of $G$.  Eliminating $x_6$ identifies
\[
 G\cong F_5=\langle x_1,\ldots,x_5\rangle.
\]
Exact free reduction of the automorphism defined by the displayed braid word $b$ gives
\begin{equation}\label{eq:b-inner-action}
 b_*(x_j)=c\,x_j\,c^{-1}\qquad(1\le j\le5),
\end{equation}
where
\begin{equation}\label{eq:b-conjugator}
 c=x_5^{-1}x_4^{-1}x_3^{-1}x_2^{-1}x_3x_5^{-1}x_4^{-1}x_3^{-1}x_2^{-1}x_1^{-1}.
\end{equation}
Thus the induced outer automorphism of $\pi_1(S^2\setminus\mathbf b)$ is trivial.  Since $b$ already comes from an orientation-preserving punctured-sphere mapping class, its automorphism preserves the peripheral conjugacy classes.  Dehn--Nielsen--Baer therefore gives
\[
 \pi(b)=1,
\]
which proves \eqref{eq:spherical-braid-identity}.

Now set
\[
 \widehat P:=\widehat w_D\widehat w_1\widehat w_R\widehat w_U
 \widehat w_\infty\widehat w_L\widehat w_0
 =\Phi(b).
\]
By \eqref{eq:BH-commuting} and \eqref{eq:spherical-braid-identity},
\[
 q(\widehat P)=\pi(b)=1.
\]
Hence
\begin{equation}\label{eq:raw-lift-kernel}
 \widehat P\in\ker q=\{1,\iota\}.
\end{equation}
The homology action distinguishes the two possibilities.  The four nonseparating words act on $H_1(\Sigma_2;\mathbb Z)$ by
\[
[\widehat w_D]=\begin{pmatrix}-1&4&-2&2\\-1&3&-1&1\\-1&2&0&1\\-1&2&-1&2\end{pmatrix},\quad
[\widehat w_R]=\begin{pmatrix}1&4&0&2\\0&1&0&0\\0&2&1&1\\0&0&0&1\end{pmatrix},
\]
\[
[\widehat w_U]=\begin{pmatrix}3&4&2&2\\-1&-1&-1&-1\\1&2&2&1\\-1&-2&-1&0\end{pmatrix},\quad
[\widehat w_L]=\begin{pmatrix}1&0&0&0\\-1&1&-1&0\\0&0&1&0\\-1&0&-1&1\end{pmatrix}.
\]
Direct multiplication gives
\begin{equation}\label{eq:ordered-homology-check}
 [\widehat w_D][\widehat w_R][\widehat w_U][\widehat w_L]=I_4.
\end{equation}
Together with \eqref{eq:sep-corrections}, and using the actual interleaved order, this yields
\[
 [\widehat P]
 =I_4(-I_4)I_4I_4(I_4)I_4(-I_4)
 =I_4.
\]
On the other hand the hyperelliptic involution satisfies $[\iota]=-I_4$.  Equation \eqref{eq:raw-lift-kernel} therefore forces
\[
 \widehat P=1,
\]
which proves \eqref{eq:raw-lift-identity}.  Only the two elements of $\ker q$ are being distinguished here; no faithfulness statement about the symplectic representation is used.

Finally, the actual Lefschetz factors differ from the raw Artin-chain lifts only at $1$ and $0$.  Equations \eqref{eq:literal-sep-lifts}--\eqref{eq:literal-nonsep-lifts} give
\[
\begin{aligned}
 \mathcal W_D\mathcal W_1\mathcal W_R\mathcal W_U\mathcal W_\infty\mathcal W_L\mathcal W_0
 &=\widehat w_D(\iota\widehat w_1)\widehat w_R\widehat w_U
   \widehat w_\infty\widehat w_L(\iota\widehat w_0)\\
 &=\iota^2\,
   \widehat w_D\widehat w_1\widehat w_R\widehat w_U
   \widehat w_\infty\widehat w_L\widehat w_0.
\end{aligned}
\]
Here we used that $\iota$ is central in $\Mod(\Sigma_2)$.  Since $\iota^2=1$ and $\widehat P=1$, the corrected product is also the identity.  This proves \eqref{eq:literal-exact-product}.
\end{proof}

Proposition~\ref{prop:spherical-braid-check} also explains why the product relation holds after passage to the spherical mapping class group, although the corresponding word is nontrivial in $B_6$ by \eqref{eq:raw-exponent-sum}.

\section{Fundamental group and the regular-fiber complement}\label{sec:pi1}
We next recall the fundamental-group information needed for fiber sums.  We do not recompute $\pi_1$ from the braid words; we use the facts established in Akhmedov--Monden \cite[Section~5]{AM}.

Let $S$ denote the relatively minimal total space and let $\Sigma\subset S$ be a regular genus-two fiber.  Because the fibration has a section, the usual van Kampen description gives a surjection
\begin{equation}\label{eq:fibersurj}
\pi_1(\Sigma)\twoheadrightarrow\pi_1(S),
\end{equation}
whose kernel is the normal closure of the vanishing cycles.  Thus, for any choice of standard generators
\[
\pi_1(\Sigma)=\langle \bar c_1,\bar d_1,\bar c_2,\bar d_2\mid[\bar c_1,\bar d_1][\bar c_2,\bar d_2]=1\rangle,
\]
one has the presentation
\begin{equation}\label{eq:pi1vanishing}
\pi_1(S)=\pi_1(\Sigma)/\!\!\left\langle\!\left\langle
 a_L,a_R,a_U,a_D,d_0,d_1,d_\infty
\right\rangle\!\right\rangle,
\end{equation}
where in \eqref{eq:pi1vanishing} the symbols denote loops represented by the seven geometric vanishing cycles constructed above and assembled in Theorem~\ref{thm:orderedexplicit}.  Formula \eqref{eq:pi1vanishing} is useful even before choosing literal words for those curves.

For Xiao's fibration the quotient is known explicitly:
\begin{proposition}\label{prop:pi1}
For the relatively minimal Xiao fibration considered here,
\[
\pi_1(S)\cong\Z^2.
\]
Moreover the inclusion of the complement of a regular fiber induces an isomorphism
\[
\pi_1(S\setminus\nu\Sigma)\cong\pi_1(S)\cong\Z^2,
\]
and a meridian of $\Sigma$ is null-homotopic in $S\setminus\nu\Sigma$.
\end{proposition}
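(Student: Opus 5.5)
Since the paper says it recalls these facts from Akhmedov--Monden rather than recomputing them from the braid words, my plan is to assemble them from two ingredients: the van Kampen presentation \eqref{eq:pi1vanishing} and the global geometry of the relatively minimal model $S$. I will compute $\pi_1(S)$ first and then treat the fiber complement.

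\textbf{Computing $\pi_1(S)$.} I would argue in three steps, in this order.

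First, I get the abelian upper bound. By Proposition~\ref{prop:algconstruction} the fibration has three disjoint $(-1)$-sections, so \eqref{eq:fibersurj} holds. The three separating vanishing cycles $d_0,d_1,d_\infty$ are commutators of the handle generators on either side of the corresponding neck. I would try to show that the normal closure of the seven cycles contains $[\pi_1(\Sigma),\pi_1(\Sigma)]$, so that $\pi_1(S)$ is a quotient of $H_1(\Sigma)/\langle a_L,a_R,a_U,a_D\rangle$.

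Second, I compute that abelian quotient. In homology the four nonseparating classes are the columns determined by the matrices $[\widehat w_j]$. Specifically, $[a_j]$ spans the image of $[\widehat w_j]-I_4$, and from the displayed matrices each such image has rank one. I would check that these four vectors span a rank-two primitive sublattice of $H_1(\Sigma;\Z)\cong\Z^4$. This leaves the quotient $\Z^2$, which gives $H_1(S)\cong\Z^2$.

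Third, I get the lower bound and nilpotency. Here I would use the geometry: $\chi(\mathcal O_S)=0$ after contraction, $e(S)=e(\widetilde S)-3$, and the fibration is isotrivial-type enough to map onto an elliptic curve. More concretely, I would exhibit a surjection $\pi_1(S)\to\Z^2$ from the homology computation. The step I expect to be the main obstacle is showing that $\pi_1(S)$ is abelian, i.e.\ that the commutator subgroup dies. To attack it, I would use the fact that $d_0$ is a separating curve bounding a one-holed torus $T$ that contains two of the nonseparating vanishing cycles meeting once. Killing those two cycles kills $\pi_1(T)$, and similarly for the other handle, after conjugation via the sections. If that combinatorics fails, I would fall back on citing \cite[Section~5]{AM}, as the paper explicitly does.

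\textbf{The fiber complement.} For $S\setminus\nu\Sigma$, the standard argument is that $\pi_1(S\setminus\nu\Sigma)$ is generated by $\pi_1(\Sigma)$ together with the meridian $\mu$ of $\Sigma$. A section $\sigma$ meets $\Sigma$ transversely once. The sphere $\sigma$ minus a disk is a disk in $S\setminus\nu\Sigma$ bounded by $\mu$, so $\mu$ is null-homotopic. Hence the complement has the same presentation as $S$: all vanishing cycles still die because the singular fibers lie away from $\Sigma$. The inclusion therefore induces an isomorphism onto $\Z^2$.
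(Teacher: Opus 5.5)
\textbf{Gap: abelianness of $\pi_1(S)$.} Two parts of your proposal are fine. Your fiber-complement argument is essentially the paper's: the punctured section is a disk bounded by the meridian, and van Kampen applies to $S=(S\setminus\nu\Sigma)\cup\Sigma\times D^2$. Your homology step is also correct. The images of $[\widehat w_j]-I_4$ are spanned by $a_R=(2,0,1,0)^T$, $a_L=(0,1,0,1)^T$, $a_D=\pm(a_R+a_L)$ and $a_U=\pm(a_R-a_L)$. These span a primitive rank-two lattice $V$, so $H_1(S;\Z)\cong\Z^2$.

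The gap is the passage from $H_1$ to $\pi_1$: nothing you wrote shows that $\pi_1(S)$ is abelian. Your first step (``the normal closure contains $[\pi_1(\Sigma),\pi_1(\Sigma)]$'') only restates that claim. The mechanism you propose in the third step cannot work. In the basis $(a_1,b_1,a_2,b_2)$, $V$ is spanned by $2a_1+a_2$ and $b_1+b_2$, so the intersection form on $V$ is three times the hyperbolic form; for instance $|a_R\cdot a_L|=3$. Two consequences follow:
\begin{itemize}
\item No two nonseparating vanishing cycles have algebraic intersection $\pm1$, so no two of them meet once.
\item No one-holed torus $T$ bounded by some $d_j$ contains two homologically independent vanishing cycles. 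Otherwise the primitive lattice $V$ would equal $H_1(T)$, on which the form is unimodular.
\end{itemize}
This does not depend on the choice of paths. A Hurwitz move changes a class by a transvection along a class in $V$, or not at all for a separating twist, and global conjugation is an isometry. So ``kill a handle by killing two cycles meeting once'' is never available here. The vanishing of the commutator subgroup is a global fact that your combinatorics does not reach. Separately, the fibration is not isotrivial, since the nonseparating twists have infinite order.

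\textbf{What the paper does, and a self-contained alternative.} The paper does not attempt this computation. It takes from Akhmedov--Monden the identification $S\cong T^2\times S^2\#3\overline{\CP}^{\,2}$, from which $\pi_1(S)\cong\Z^2$ is immediate, and then treats the complement as you do. As written, your argument proves only $H_1(S)\cong\Z^2$ and relies on your fallback citation for everything else.

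For a self-contained proof, identify $S$ globally rather than combinatorially. Use the conic pencil $2H-E_0-E_1-E_2-E_3$ on $Y$:
\begin{itemize}
\item A generic member meets $D$ only along $\widetilde C$, in four points, and all six line components are vertical.
\item The three singular members $\widetilde L_i+\widetilde L'_i+2E_{3+i}$ lift to double fibers $2\widetilde E$ with $\widetilde E$ elliptic, once the six line lifts are contracted.
\item The map $\widetilde C\to\mathbb P^1$ has all six of its ramification points on those members, so every other fiber is smooth.
\end{itemize}
The result is an elliptic fibration over $\mathbb P^1$ with $\chi(\mathcal O)=0$ and exactly three double fibers. Hence $K\equiv-\tfrac12F$, and the surface is ruled over an elliptic curve. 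Since $S$ is that surface blown up three times, $\pi_1(S)\cong\Z^2$.
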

\begin{proof}
Akhmedov--Monden identify Xiao's total space as $T^2\times S^2\#3\overline{\CP}^{\,2}$ (equivalently, the indicated three-fold blowup of the ruled surface) and hence its fundamental group is $\Z^2$ \cite[Section~2.5]{AM}.  They further observe that the fundamental group is carried by a regular fiber and that the existence of a sphere section kills the meridian of the fiber in the complement; consequently
\[
\pi_1(S\setminus\nu\Sigma)\cong\pi_1(S)\cong\Z^2
\]
\cite[Section~5.1]{AM}.  In the present branch-cover model there are in fact three disjoint $(-1)$-sections, so the null-homotopy of the meridian is visible directly: the punctured section is a disk in $S\setminus\nu\Sigma$ whose boundary is the meridian.
\end{proof}

This is the form needed in fiber-sum calculations: on the Xiao side the boundary meridian is trivial, and the image of the fiber group is $\mathbb Z^2$.  Van Kampen for $(X\setminus\nu F)\cup_\varphi(S\setminus\nu\Sigma)$ therefore adds the relations in the kernel of $\pi_1(\Sigma)\to\mathbb Z^2$ and kills the glued meridian, as in \cite[Section~5]{AM}.

\section{Topological checks from the branch cover}
The same invariants can also be read directly from the branch cover, in parallel with the computation of Akhmedov--Monden for Xiao's model \cite[Example~8]{AM}.  Since $e(Y)=10$ and the seven components of $D$ are smooth rational curves,
\[
e(D)=14,
\]
so
\[
e(\widetilde S)=2e(Y)-e(D)=20-14=6.
\]
Furthermore
\[
D^2=100-16\cdot7=-12,
\]
and the branched double-cover signature formula (see, for example, \cite{BHPV}) gives
\[
\sigma(\widetilde S)=2\sigma(Y)-\frac12D^2=2(-6)+6=-6.
\]
Contracting the three vertical $(-1)$-spheres gives the relatively minimal fibration $S$ with
\[
e(S)=3,\qquad \sigma(S)=-3.
\]
The same Euler characteristic follows from the Lefschetz fibration:
\[
e(S)=e(S^2)e(\Sigma_2)+7=-4+7=3.
\]
Endo's hyperelliptic signature formula \cite{Endo} for $(n,s)=(4,3)$ gives
\[
\sigma(S)=-\frac35n-\frac15s=-3.
\]
Thus
\[
\chi_h(S)=0,\qquad c_1^2(S)=-3.
\]
The three branch-line components not contained in fibers lift to three disjoint $(-1)$-sections, in agreement with the branch-cover description in \cite{AM} and \cite{Xiao}.

\section{Conclusion}
Starting with Xiao's complete quadrangle, we obtain four nonseparating twists from the tangencies and three separating classes from the $3+3$ degenerations, ordered by the chosen star system.  At a $3+3$ value the local holomorphic Lefschetz model chooses the appropriate genus-two lift.  Proposition~\ref{prop:deckcorrection-main} identifies the difference between the natural Artin-chain lift and the Picard--Lefschetz monodromy with the hyperelliptic involution.  In this way the factorization is obtained from the branched-cover geometry itself.  Section~\ref{sec:ordered} gives explicit numerical Artin representatives for the seven factors.

\appendix
\section{Exact algebra checks}\label{app:exact}
This appendix collects the polynomial calculations used in Proposition~\ref{prop:quartic} and in \eqref{eq:disc}.  All computations are over $\mathbb Q$.

The quartic equation can also be recovered directly by linear algebra.  Writing a general degree-four homogeneous polynomial with $15$ coefficients, impose $F(P_i)=0$ for $0\le i\le3$ and $F_x(P_j)=F_y(P_j)=F_z(P_j)=0$ for $4\le j\le6$.  The resulting coefficient matrix has rank $13$ and nullspace basis
\[
z(x-y)(x+y-z)^2,\qquad y(x-z)(x-y+z)^2,
\]
which proves the assertions preceding \eqref{eq:quarticpencil}.

For $f(y,z)=C(1,y,z)$, lexicographic elimination with $y>z$ gives
\[
\operatorname{GB}_{\mathrm{lex}}(f,f_y,f_z)=\{y+z-1,\ z^2-z\}.
\]
Substituting the two solutions $(y,z)=(1,0),(0,1)$ into $f,f_y,f_z$ verifies the two affine singular points.  The missing projective line $x=0$ is handled separately in Proposition~\ref{prop:quartic}.

For the cubic
\[
q_t(u)=t(1-u)(1+u-t)^2+u(1-t)(1-u+t)^2,
\]
the discriminant with respect to $u$ is exactly
\[
\operatorname{Disc}_u(q_t)
=-32t^2(t-1)^2(2t^4-4t^3+12t^2-10t-1).
\]
The quartic factor has
\[
\operatorname{Disc}_t(R)=-1259712,
\]
so it has four distinct roots.  Finally, the six line restrictions listed in Remark~\ref{rem:branchsmooth} are direct substitutions in the homogeneous equation of $C$ and certify all line--quartic intersection multiplicities used to separate the branch divisor.

These are precisely the algebraic identities used above.

\section{Numerical braid-word computation}\label{app:braidcode}
The braid words in Section~\ref{sec:ordered} come from following the branch points along the star meridians fixed there.  We used the following numerical procedure:
\begin{enumerate}
\item[(i)] evaluate the three roots of $q_t$ along each specified path;
\item[(ii)] match only those three roots from one subdivision point to the next by minimum total displacement;
\item[(iii)] keep the branch-section points $0,t,1$ fixed by formula;
\item[(iv)] apply $w=1/(u-20i)$ and record every adjacent exchange in the real projection, with sign determined by \eqref{eq:crossing-sign};
\item[(v)] freely reduce the resulting Artin word.
\end{enumerate}
The reduced words are unchanged when each linear segment is subdivided into $300,700,1500$, or $3000$ pieces.  The tangency words reduce to the conjugate half twists displayed in Section~\ref{sec:ordered}, and the three $3+3$ words give the separating partitions recorded there; Appendix~\ref{app:star-infinity-check} checks the factor at infinity by an exact Artin-action calculation.

The continuation is numerical, not interval-certified.  Agreement under subdivision tests the stability of the Artin strings.  Once the strings are obtained, the braid and mapping-class-group calculations in Section~\ref{sec:ordered} and Appendix~\ref{app:star-infinity-check} are exact.  The implementation and numerical parameters will be included in the supplementary material.

\subsection{The star-system factor at infinity}\label{app:star-infinity-check}
For the factor at infinity, the word in Section~\ref{sec:ordered} can be checked directly.  It decomposes as
\[
 w_\infty=K_\infty m_\infty K_\infty^{-1},
\]
where
\begingroup\small
\[
\begin{aligned}
K_\infty={}&
\sigma_5\sigma_4\sigma_5\sigma_1^{-1}\sigma_4^{-1}\sigma_3^{-1}\sigma_2^{-1}
\sigma_5^{-1}\sigma_4^{-1}\sigma_3^{-1}\sigma_5^{-1}\sigma_4^{-1}\sigma_5^{-1}\\
&{}\cdot\sigma_3^{-1}\sigma_4^{-1}\sigma_5^{-1}\sigma_2^{-1}\sigma_3^{-1}\sigma_4^{-1},
\end{aligned}
\]
\endgroup
and
\[
 m_\infty=
 \sigma_3\sigma_2\sigma_3\sigma_3\sigma_2\sigma_1^{-1}
 \sigma_3\sigma_2^{-1}\sigma_3^{-1}\sigma_2\sigma_1\sigma_2
 \sigma_3\sigma_2\sigma_1\sigma_3\sigma_2\sigma_3.
\]
Apply the standard faithful Artin representation
\[
 B_6\longrightarrow\operatorname{Aut}(F_6),\qquad
 \sigma_i:x_i\mapsto x_ix_{i+1}x_i^{-1},\quad x_{i+1}\mapsto x_i,
\]
with all other free generators fixed.  Free reduction of the six images gives exactly the same automorphism for $m_\infty$ and $(\sigma_2\sigma_3)^6$.  Faithfulness therefore gives
\[
 m_\infty=(\sigma_2\sigma_3)^6\qquad\text{in }B_6.
\]
The permutation induced by $K_\infty$ is
\[
 1\mapsto2,\qquad2\mapsto6,\qquad3\mapsto3,\qquad
 4\mapsto1,\qquad5\mapsto5,\qquad6\mapsto4,
\]
so it carries the three-strand set $\{2,3,4\}$ to $\{1,3,6\}$.  Consequently the corresponding transported three-puncture circle separates
\[
 \{b_1,b_3,b_6\}\quad\text{from}\quad\{b_2,b_4,b_5\}.
\]
This proves the stated braid identity for the extracted word.

\subsection{Braid-level simplification of the seven star-system factors}\label{app:braid-simplification}
We record a direct braid-level reduction of the printed Artin strings to the seven Dehn-twist factors in Theorem~\ref{thm:orderedexplicit}.

For the four tangencies, exact reduction in $B_6$ gives
\[
 w_D=B_D\sigma_4B_D^{-1},\qquad
 w_R=B_R\sigma_3B_R^{-1},\qquad
 w_U=B_U\sigma_2B_U^{-1},\qquad
 w_L=B_L\sigma_3B_L^{-1},
\]
with the conjugating braids $B_D,B_R,B_U,B_L$ displayed in Section~\ref{sec:ordered}.  Applying $\Phi(\sigma_i)=T_i=t_{c_i}$ therefore gives
\[
 \Phi(w_D)=t_{a_D},\qquad \Phi(w_R)=t_{a_R},\qquad
 \Phi(w_U)=t_{a_U},\qquad \Phi(w_L)=t_{a_L}.
\]

The separating factors simplify similarly.  At $t=0$ the printed word is
\[
 w_0=(\sigma_2\sigma_1\sigma_2)^2(\sigma_5\sigma_4\sigma_5)^2.
\]
The braid relations give
\[
 (\sigma_2\sigma_1\sigma_2)^2=(\sigma_1\sigma_2)^3,
 \qquad
 (\sigma_5\sigma_4\sigma_5)^2=(\sigma_4\sigma_5)^3,
\]
and hence
\[
 w_0=\Omega:=(\sigma_1\sigma_2)^3(\sigma_4\sigma_5)^3.
\]
By Proposition~\ref{prop:deckcorrection-main},
\[
 \Phi(w_0)=\Phi(\Omega)=\iota t_{d_0}.
\]
Thus the geometric deck correction gives
\[
 \mathcal W_0=\iota\Phi(w_0)=t_{d_0}.
\]

At $t=1$, with $K_1=\sigma_5\sigma_4\sigma_3\sigma_2\sigma_1$, the printed word reduces exactly to
\[
 w_1=K_1\Omega K_1^{-1}.
\]
If $Q_1=\Phi(K_1)$ and $d_1=Q_1(d_0)$, then centrality of $\iota$ gives
\[
 \Phi(w_1)=Q_1(\iota t_{d_0})Q_1^{-1}=\iota t_{d_1},
 \qquad
 \mathcal W_1=\iota\Phi(w_1)=t_{d_1}.
\]

Finally Appendix~\ref{app:star-infinity-check} proves the exact identity
\[
 w_\infty=K_\infty(\sigma_2\sigma_3)^6K_\infty^{-1}.
\]
Writing $Q_\infty=\Phi(K_\infty)$ and letting $d'$ denote the separating lift of the boundary of the three-strand cluster in positions $2,3,4$, the two-chain relation gives
\[
 \Phi(w_\infty)=Q_\infty(T_2T_3)^6Q_\infty^{-1}
 =Q_\infty t_{d'}Q_\infty^{-1}=t_{d_\infty}.
\]
Thus no deck correction occurs for this particular representative at infinity.  Altogether the seven printed braid words lift, after the two geometric deck corrections at $1$ and $0$, to
\[
 \boxed{
 t_{a_D}\,t_{d_1}\,t_{a_R}\,t_{a_U}\,
 t_{d_\infty}\,t_{a_L}\,t_{d_0}},
\]
which is exactly the ordered word of Theorem~\ref{thm:orderedexplicit}.

\section{Alternative local braid coordinates}\label{app:independent-check}
For comparison, choose paths independently of the star system in Section~\ref{sec:ordered}.  Their local Artin coordinates exhibit the same lift ambiguity at the three $3+3$ degenerations.  Since these paths are only local choices, the order below is not a factorization order.

We use the same disk coordinate
\[
w=\frac{1}{u-20i}
\]
and the same crossing convention as in Section~\ref{sec:ordered}.  Put $\varepsilon=10^{-4}$.  The independent paths are
\begin{align*}
\gamma_L&:\ \frac14\to \frac9{50}+\frac{11}{50}i
 \to\tau_L+\frac3{25}i\to\tau_L+\varepsilon i,\\
\gamma_R&:\ \frac14\to \frac34-\frac{11}{50}i
 \to\frac{103}{100}-\frac{11}{50}i
 \to\tau_R-\frac3{25}i\to\tau_R-\varepsilon i,\\
\gamma_U&:\ \frac14\to \frac7{20}+\frac9{20}i
 \to\frac12+\frac95i\to\tau_U-\varepsilon i,\\
\gamma_D&:\ \frac14\to \frac7{20}-\frac9{20}i
 \to\frac12-\frac95i\to\tau_D+\varepsilon i,\\
\gamma_0&:\ \frac14\to\varepsilon,\\
\gamma_1&:\ \frac14\to\frac{21}{50}+\frac3{25}i
 \to\frac{29}{50}+\frac3{25}i
 \to\frac{17}{20}+\frac2{25}i\to1-\varepsilon,\\
\gamma_\infty&:\ \frac14\to\frac7{20}+\frac3{10}i
 \to\frac45+\frac12i\to2+\frac12i\to100.
\end{align*}

Their interiors avoid the critical values.  Continuing the three roots of $q_t$ along these paths, while keeping the branch sections $0,t,1$ labeled by their formulas, gives the following words by the numerical procedure of Appendix~\ref{app:braidcode}.

\begin{proposition}[Independent-path Artin coordinates]\label{prop:independent-braids}
The conjugating braids are
\begin{align*}
\beta_L&=\sigma_2\sigma_5\sigma_4\sigma_5\sigma_1\sigma_2,\\
\beta_R&=\sigma_5^{-1}\sigma_4^{-1}\sigma_3^{-1}\sigma_1\sigma_4\sigma_5\sigma_4,\\
\beta_U&=\sigma_5\sigma_4\sigma_1^{-1}\sigma_5\sigma_4^{-1}\sigma_3^{-1},\\
\beta_D&=\sigma_5^{-1}\sigma_4^{-1}\sigma_3^{-1}\sigma_5^{-1}
          \sigma_1\sigma_2^{-1}\sigma_3^{-1},\\
\kappa_0&=1,\qquad
\kappa_1=\sigma_5\sigma_4\sigma_3\sigma_2\sigma_1,\\
\kappa_\infty&=\sigma_5\sigma_4\sigma_3\sigma_4^{-1}\sigma_1^{-1}
\sigma_3^{-1}\sigma_5^{-1}\sigma_4^{-1}\sigma_5^{-1}.
\end{align*}
The four tangency braids are
\begin{equation}\label{eq:independent-tangencies}
\beta_L\sigma_3\beta_L^{-1},\qquad
\beta_R\sigma_3\beta_R^{-1},\qquad
\beta_U\sigma_2\beta_U^{-1},\qquad
\beta_D\sigma_4\beta_D^{-1}.
\end{equation}
If
\[
\Omega=(\sigma_1\sigma_2)^3(\sigma_4\sigma_5)^3,
\]
the three $3+3$ disk-braid representatives are
\begin{equation}\label{eq:independent-separating}
\Omega,\qquad
\kappa_1\Omega\kappa_1^{-1},\qquad
\kappa_\infty\Omega\kappa_\infty^{-1}.
\end{equation}
\end{proposition}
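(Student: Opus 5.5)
The plan is to treat Proposition~\ref{prop:independent-braids} as a computation that has two layers. The first layer is the numerical extraction of Artin words along the paths $\gamma_\bullet$. The second layer is an exact identification, in $B_6$, of each extracted word with the displayed normal form.

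For the first layer, I would apply the procedure of Appendix~\ref{app:braidcode} to each path $\gamma_\xi$:
\begin{enumerate}
\item[(a)] continue the three roots of $q_t$, keeping $0,t,1$ fixed by formula;
\item[(b)] pass to the chart $w=1/(u-20i)$ and record adjacent real-projection crossings with the sign convention \eqref{eq:crossing-sign};
\item[(c)] close up with a small counterclockwise circle of radius comparable to $\varepsilon$ about the endpoint critical value, and return along the reversed path.
\end{enumerate}
Each raw word then has the shape $u\,\ell\,u^{-1}$. Here $u$ is the word recorded along $\gamma_\xi$, and $\ell$ is the local word on the small circle. Proposition~\ref{prop:half} and Lemma~\ref{lem:scaling} predict $\ell$ exactly. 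At the four tangencies $\ell$ is a single positive half twist $\sigma_i$ on the adjacent pair that collides at the endpoint. At $0,1,\infty$, the expansions in Proposition~\ref{prop:sep} show that at $t=\varepsilon$, $1-\varepsilon$, and $t=100$ (so $s=1/100$) the two clusters are already real or nearly real and consist of adjacent points. So $\ell$ is the product of the two cluster full twists, conjugated by whatever reordering occurs between the terminal point and the circle.

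I would check stability by repeating the continuation with $300,700,1500,3000$ subdivisions, as in Section~\ref{sec:ordered}.

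For the second layer, the key step is to verify each displayed identity exactly. For each raw word $w$, I would check, using the faithful Artin action $B_6\to\operatorname{Aut}(F_6)$ of Appendix~\ref{app:star-infinity-check}, that it equals $\beta_\xi\sigma_i\beta_\xi^{-1}$ in the tangency case, or $\kappa_\xi\Omega\kappa_\xi^{-1}$ in the $3+3$ case. This amounts to freely reducing the images of $x_1,\dots,x_6$ under both braids and comparing them.

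Two easy consistency checks come first:
\begin{itemize}
\item the exponent sums must be $1$ and $12$ respectively;
\item the induced permutations must match the expected collision pairs and cluster partitions.
\end{itemize}
At $t=0$, the path $\gamma_0$ runs along the real segment $[\varepsilon,\frac14]$. On that segment all six points stay real, and their order $b_1<\dots<b_6$ is preserved. Hence $\kappa_0=1$, and the circle word is literally $(\sigma_1\sigma_2)^3(\sigma_4\sigma_5)^3$. This gives a check of the method that uses no numerics beyond reality of the roots. Reality can be certified from the sign of $\operatorname{Disc}_u(q_t)=-32t^2(t-1)^2R(t)$ on $(0,\frac14]$, since $R<0$ there.

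I expect the main obstacle to be the factor at infinity. The path $\gamma_\infty$ is long and ends at $t=100$, and the raw word there is lengthy, much like $w_\infty$ in the star system. Recognizing it as a conjugate of $\Omega$ rather than of $(\sigma_2\sigma_3)^6$ requires the right conjugator. My approach would be to first read off the partition from the permutation. Next, I would guess $\kappa_\infty$ by extracting the word recorded along $\gamma_\infty$ up to the start of the small loop. I would then confirm $w=\kappa_\infty\Omega\kappa_\infty^{-1}$ by comparing Artin automorphisms. If the first guess fails, I would compare instead with $K\,(\sigma_2\sigma_3)^6K^{-1}$-type forms, and adjust by a braid commuting with $\Omega$.
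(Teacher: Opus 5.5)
Your plan follows the paper's route. The paper's only justification here is the numerical continuation of Appendix~\ref{app:braidcode} along the paths $\gamma_\bullet$, closed by small positive loops, together with the subdivision test. Your local predictions at the four tangencies, at $t=1$ and at $t=0$ are sound.

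\textbf{The gap is your prediction at $\infty$.} The expansions in Proposition~\ref{prop:sep} at infinity are written in the fiber coordinate $v=y/z=u/t$. The continuation and the chart $w=1/(u-20i)$, however, use $u$. In $u$, for $|t|=100$ the two triples are $A=\{0,\ \tfrac12-\tfrac34t^{-1}+\cdots,\ 1\}$ and $B=\{t,\ t\pm i+O(t^{-1})\}$. These are not small clusters, and $B$ is not nearly real.

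Follow $t$ once around $|t|=100$, positively about $\infty$:
\begin{enumerate}
\item[(i)] $A$ is essentially static, so it acquires no twist at all.
\item[(ii)] $B$ translates around $u=\infty$ without rotating in $u$. Since $\partial_u=-\zeta^2\partial_\zeta$ has a double zero at $\zeta=1/u=0$, in the $w$-plane the triple $B$ turns twice positively. Meanwhile its center circles $w=0$ along a loop enclosing no other branch point.
\end{enumerate}
So the terminal word is $\Delta_3^4$ on $B$, a conjugate of $(\sigma_4\sigma_5)^6$. This is exactly the shape $w_\infty=K_\infty(\sigma_2\sigma_3)^6K_\infty^{-1}$ found for the star system, whose twisted triple $\{b_1,b_3,b_6\}$ indeed contains $b_3=t_*$, the point $u=t$.

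Equivalently, the chart's point at infinity $u=20i$ is $v=20is$, which winds once around the $v=0$ triple. This point push multiplies the Lemma~\ref{lem:scaling} picture $\Delta_A^2\Delta_B^2$ by $\Delta_A^{-2}\Delta_B^{2}$.

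\textbf{Your fallback cannot bridge this.} Conjugates of $\Omega$ and of $(\sigma_2\sigma_3)^6$ lie in different conjugacy classes of $B_6$. Both are pure braids, but their linking numbers are $1$ on six pairs versus $2$ on three pairs. Their Artin-chain lifts also act on $H_1$ as $-I_4$ versus $I_4$. So no choice of conjugator, and no adjustment by a braid commuting with $\Omega$, can help.

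Moreover, in the fixed chart the disk-braid monodromy is a homomorphism on $\pi_1$ of the base with two kinds of points removed: the critical values, and the four values of $t$ (near $19i$, $20i$, $21i$ and $\tfrac12$) where a branch point passes through $u=20i$. None of these four lies in either terminal disk about $\infty$, so the two meridians of $\infty$ are conjugate there. The star-system result therefore already shows that the identity $w=\kappa_\infty\Omega\kappa_\infty^{-1}$, which you intend to certify by the Artin action, will fail for a disk braid computed in $w=1/(u-20i)$. The paper's proof, which only reports the numerics, does not address this tension either.

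To justify the displayed $\infty$-entry you would have to adopt, and state, a different convention for the terminal loop, for instance following it in the $v$-trivialization, where Lemma~\ref{lem:scaling} applies verbatim. You would then have to record that this changes the raw Artin-chain lift by $\Phi(\Delta_A^{-2}\Delta_B^{2})=(T_1T_2)^{-3}(T_4T_5)^3=\iota$. With that convention, the $\infty$-entry is no longer a value of the same disk-braid monodromy as the other six factors.
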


The same words are obtained with $300$, $700$, $1500$, and $3000$ subdivisions of each linear segment.  Closing the four paths by small positive circles about the tangency values gives the conjugates in \eqref{eq:independent-tangencies}, while the calculation at $0,1,\infty$ gives \eqref{eq:independent-separating}.  This subdivision test is numerical and does not provide interval certification.

\begin{figure}[ht]
\centering
\begin{tikzpicture}[scale=1.02]
  \coordinate (b1) at (0,0);
  \coordinate (b2) at (1.25,0);
  \coordinate (b3) at (2.50,0);
  \coordinate (b4) at (3.75,0);
  \coordinate (b5) at (5.00,0);
  \coordinate (b6) at (6.25,0);

  \draw[magenta!75!black,thick,rounded corners=5pt]
    (-0.42,-0.62) rectangle (2.93,0.96);
  \node[magenta!75!black] at (1.25,1.24) {$\Delta$};

  \draw[blue!80!black,thick]
    (b1) .. controls (0.32,0.36) and (0.93,0.36) .. (b2);
  \draw[red!75!black,thick]
    (b2) .. controls (1.57,0.36) and (2.18,0.36) .. (b3);
  \draw[green!55!black,thick]
    (b3) .. controls (2.82,0.36) and (3.43,0.36) .. (b4);
  \draw[orange!90!black,thick]
    (b4) .. controls (4.07,0.36) and (4.68,0.36) .. (b5);
  \draw[magenta!80!black,thick]
    (b5) .. controls (5.32,0.36) and (5.93,0.36) .. (b6);

  \node[blue!80!black] at (0.625,0.53) {$\eta_1$};
  \node[red!75!black] at (1.875,0.53) {$\eta_2$};
  \node[green!55!black] at (3.125,0.53) {$\eta_3$};
  \node[orange!90!black] at (4.375,0.53) {$\eta_4$};
  \node[magenta!80!black] at (5.625,0.53) {$\eta_5$};

  \foreach \p/\lab in {b1/$b_1$,b2/$b_2$,b3/$b_3$,b4/$b_4$,b5/$b_5$,b6/$b_6$}{
    \fill (\p) circle (1.5pt);
    \node[below=4pt] at (\p) {\lab};
  }
\end{tikzpicture}
\caption{The standard Artin arcs on the reference branch sphere.  For each $i$, the curve
$c_i=p^{-1}(\eta_i)$ is the lift of the arc joining $b_i$ to $b_{i+1}$, and
$T_i=t_{c_i}$ is the positive Dehn twist about that curve.  The curves
$c_1,\ldots,c_5$ form the standard five-chain on $\Sigma_2$.  The lift of
the three-puncture circle $\Delta$ is the separating curve $d$ used below.}
\label{fig:independent-standard-arcs}
\end{figure}

We next compare the lifts.  Let $c_1,\ldots,c_5$ be the standard five-chain on the reference genus-two fiber and write $T_i=t_{c_i}$.  Under the Artin-chain homomorphism $\Phi(\sigma_i)=T_i$, Proposition~\ref{prop:deckcorrection-main} gives
\begin{equation}\label{eq:independent-lift}
\Phi(\Omega)=(T_1T_2)^3(T_4T_5)^3=\iota\,t_d,
\qquad
t_d=(T_1T_2)^6=(T_4T_5)^6.
\end{equation}
Thus the raw disk-braid lift is the other preimage of the spherical class from the local Lefschetz monodromy $t_d$.  If $Q_j=\Phi(\kappa_j)$ and
\[
d_0^{\mathrm{ind}}=d,\qquad
d_1^{\mathrm{ind}}=Q_1(d),\qquad
d_\infty^{\mathrm{ind}}=Q_\infty(d),
\]
then the geometric separating factors are
\[
t_{d_0^{\mathrm{ind}}},\qquad
t_{d_1^{\mathrm{ind}}}=Q_1t_dQ_1^{-1},\qquad
t_{d_\infty^{\mathrm{ind}}}=Q_\infty t_dQ_\infty^{-1},
\]
whereas the three raw lifts of \eqref{eq:independent-separating} are obtained by multiplying these factors by $\iota$.  Since $\iota$ is central, the possible choice of lift of $Q_j$ does not affect the transported separating curve.

There is also a short homological check.  For the symplectic basis $(a_1,b_1,a_2,b_2)$ used in Section~\ref{sec:ordered}, take
\[
[c_1]=a_1,\qquad [c_2]=b_1,\qquad [c_3]=a_1+a_2,\qquad
[c_4]=b_2,\qquad [c_5]=a_2.
\]
The four transported nonseparating cycles determined by the independent paths have classes
\begin{equation}\label{eq:independent-H1}
\begin{aligned}
\relax[a_L^{\mathrm{ind}}]&=(0,-1,0,-1)^T,&
[a_R^{\mathrm{ind}}]&=(2,0,1,0)^T,\\
[a_U^{\mathrm{ind}}]&=(-2,1,-1,1)^T,&
[a_D^{\mathrm{ind}}]&=(-2,-1,-1,-1)^T.
\end{aligned}
\end{equation}
Each vector is primitive, and the corresponding conjugate in \eqref{eq:independent-tangencies} acts by the symplectic transvection about that vector.  The three factors in \eqref{eq:independent-lift} obtained from $t_d$ and its conjugates act trivially on $H_1$.  Thus this second path system independently gives four nonseparating and three separating local factors, with the same lift correction at the $3+3$ values as in Proposition~\ref{prop:deckcorrection-main}.

The global product remains the one associated with the distinguished star system of Theorem~\ref{thm:orderedexplicit}.

\section*{Acknowledgments}
The author thanks Fabrizio Catanese for several helpful conversations about Xiao Gang's genus-two fibrations, Ludmil Katzarkov for discussions of Moishezon's braid-monodromy techniques, and Denis Auroux for a brief discussion of Xiao's fibration some years ago.  Most of the work underlying this paper dates back several years; the project was taken up again and completed more recently.  The author gratefully acknowledges the hospitality of the University of Bayreuth, where he was a Humboldt Research Fellow during the summers of 2021 and 2022 and completed part of this work.  An LLM-based tool was used for limited assistance with computational preparation, grammar checking, and figure generation.  The author checked the mathematical arguments and takes full responsibility for the contents of the paper.


\begin{thebibliography}{99}
\bibitem{AM}
A.~Akhmedov and N.~Monden,
\href{https://doi.org/10.1215/21562261-2019-0067}{\emph{Genus 2 Lefschetz fibrations with $b_2^+=1$ and $c_1^2=1,2$}},
\emph{Kyoto J. Math.} \textbf{60} (2020), no.~4, 1419--1451,
DOI: \href{https://doi.org/10.1215/21562261-2019-0067}{10.1215/21562261-2019-0067};
\href{https://arxiv.org/abs/1509.01853}{arXiv:1509.01853} [math.GT], first posted September~6, 2015.

\bibitem{AKBranched}
D.~Auroux and L.~Katzarkov,
\href{https://doi.org/10.1007/PL00005795}{\emph{Branched coverings of $\mathbb{CP}^2$ and invariants of symplectic $4$-manifolds}},
\emph{Invent. Math.} \textbf{142} (2000), no.~3, 631--673,
DOI: \href{https://doi.org/10.1007/PL00005795}{10.1007/PL00005795}.

\bibitem{AKDoubling}
D.~Auroux and L.~Katzarkov,
\href{https://doi.org/10.4310/PAMQ.2008.v4.n2.a2}{\emph{A degree doubling formula for braid monodromies and Lefschetz pencils}},
\emph{Pure Appl. Math. Q.} \textbf{4} (2008), no.~2, 237--318,
DOI: \href{https://doi.org/10.4310/PAMQ.2008.v4.n2.a2}{10.4310/PAMQ.2008.v4.n2.a2};
\href{https://arxiv.org/abs/math/0605001}{arXiv:math/0605001}.

\bibitem{BHPV}
W.~P.~Barth, K.~Hulek, C.~A.~M.~Peters, and A.~Van de Ven,
\href{https://doi.org/10.1007/978-3-642-57739-0}{\emph{Compact complex surfaces}}, 2nd ed., Ergeb. Math. Grenzgeb. (3), vol.~4,
Springer-Verlag, Berlin, 2004,
DOI: \href{https://doi.org/10.1007/978-3-642-57739-0}{10.1007/978-3-642-57739-0}.

\bibitem{BH}
J.~S.~Birman and H.~M.~Hilden,
\href{https://doi.org/10.1515/9781400822492-007}{\emph{On the mapping class groups of closed surfaces as covering spaces}},
in \emph{Advances in the theory of Riemann surfaces}, Ann. of Math. Stud., no.~66,
Princeton Univ. Press, Princeton, NJ, 1971, pp.~81--115,
DOI: \href{https://doi.org/10.1515/9781400822492-007}{10.1515/9781400822492-007}.

\bibitem{Endo}
H.~Endo,
\href{https://doi.org/10.1007/s002080050012}{\emph{Meyer's signature cocycle and hyperelliptic fibrations}},
Math. Ann. \textbf{316} (2000), no.~2, 237--257,
DOI: \href{https://doi.org/10.1007/s002080050012}{10.1007/s002080050012}.

\bibitem{FM}
B.~Farb and D.~Margalit,
\href{https://doi.org/10.23943/princeton/9780691147949.001.0001}{\emph{A primer on mapping class groups}}, Princeton Math. Ser., vol.~49,
Princeton Univ. Press, Princeton, NJ, 2012,
DOI: \href{https://doi.org/10.23943/princeton/9780691147949.001.0001}{10.23943/princeton/9780691147949.001.0001}.

\bibitem{KatzMonodromy}
L.~Katzarkov,
\href{https://doi.org/10.1023/A:1022375625839}{\emph{Monodromy invariants: from symplectic to smooth manifolds}},
\emph{Acta Appl. Math.} \textbf{75} (2003), no.~1--3, 85--103,
DOI: \href{https://doi.org/10.1023/A:1022375625839}{10.1023/A:1022375625839}.

\bibitem{MoishezonStable}
B.~Moishezon,
\href{https://doi.org/10.1007/BFb0090891}{\emph{Stable branch curves and braid monodromies}},
in \emph{Algebraic Geometry (Chicago, 1980)}, Lecture Notes in Math., vol.~862,
Springer, Berlin--New York, 1981, pp.~107--192,
DOI: \href{https://doi.org/10.1007/BFb0090891}{10.1007/BFb0090891}.

\bibitem{MoishezonBraidsII}
B.~Moishezon,
\href{https://www.ams.org/books/conm/044/}{\emph{Algebraic surfaces and the arithmetic of braids. II}},
in \emph{Combinatorial Methods in Topology and Algebraic Geometry (Rochester, N.Y., 1982)},
Contemp. Math., vol.~44, Amer. Math. Soc., Providence, RI, 1985, pp.~311--344.

\bibitem{MT1}
B.~Moishezon and M.~Teicher,
\href{https://doi.org/10.1090/conm/078/975093}{\emph{Braid group technique in complex geometry. I. Line arrangements in $\mathbb{CP}^2$}},
in \emph{Braids (Santa Cruz, CA, 1986)}, Contemp. Math., vol.~78,
Amer. Math. Soc., Providence, RI, 1988, pp.~425--555,
DOI: \href{https://doi.org/10.1090/conm/078/975093}{10.1090/conm/078/975093}.

\bibitem{Pardini}
R.~Pardini,
\href{https://doi.org/10.1515/crll.1991.417.191}{\emph{Abelian covers of algebraic varieties}},
J. Reine Angew. Math. \textbf{417} (1991), 191--214,
DOI: \href{https://doi.org/10.1515/crll.1991.417.191}{10.1515/crll.1991.417.191}.

\bibitem{Xiao}
G.~Xiao,
\href{https://doi.org/10.1007/BFb0075351}{\emph{Surfaces fibr\'ees en courbes de genre deux}},
Lecture Notes in Math., vol.~1137, Springer-Verlag, Berlin, 1985 (in French),
DOI: \href{https://doi.org/10.1007/BFb0075351}{10.1007/BFb0075351}.
\end{thebibliography}
\end{document}